\numberwithin{equation}{section}
\DeclareMathOperator{\supp}{supp}
\DeclareMathOperator{\spanning}{span}
\theoremstyle{plain}
  \newtheorem{theorem}{Theorem}[section]
  \newtheorem{proposition}[theorem]{Proposition}
  \newtheorem{lemma}[theorem]{Lemma}
  \newtheorem{corollary}[theorem]{Corollary}
  \newtheorem{remark}[theorem]{Remark}
 \theoremstyle{definition}
   \newtheorem{definition}[theorem]{Definition}
\begin{document}
\include{psfig}
\title[Paley-Wiener Theorem for Probabilistic Frames]{Paley-Wiener Theorem for Probabilistic Frames}

\author{Dongwei Chen}

\begin{abstract}
This paper establishes Paley-Wiener perturbation theorems for probabilistic frames.
The classical Paley-Wiener perturbation theorem shows that if a sequence is close to a basis in a Banach space, then this sequence is also a basis.  Similar perturbation results have been established for frames in Hilbert spaces. In this work, we show that if a probability measure is sufficiently close to a probabilistic frame in an appropriate sense, then this probability measure is also a probabilistic frame. Moreover, we obtain explicit frame bounds for such probability measures that are close to a given probabilistic frame in the $2$-Wasserstein metric. This yields an alternative proof of the fact that the set of probabilistic frames is open in $\mathcal{P}_2(\mathbb{R}^n)$ under the $2$-Wasserstein topology.
\end{abstract}

\keywords{probabilistic frames; frame perturbation; Paley-Wiener perturbation theorem; optimal transport; Wasserstein distance}
\subjclass[2020]{42C15}	
\address{Department of Mathematics, Colorado State University, Fort Collins, CO, USA, 80523.}
\email{dongwei.chen@colostate.edu}
\maketitle

\section{Introduction and Main Results}

The Paley-Wiener perturbation theorem, introduced by Raymond E. A. C. Paley and Norbert Wiener, is a classical result concerning the stability of bases in Hilbert spaces \cite{paley1934fourier}. 
Ralph P. Boas later noticed that Paley and Wiener's proof also held in Banach spaces:

\begin{theorem}[Theorem 1 in \cite{boas1940general}] \label{PaleyBasis}
    Let $\{{\bf x}_i\}_{i=1}^\infty$ be a basis for a Banach space $\mathcal{X}$ with norm $ \Vert \cdot \Vert $ and $\{{\bf y}_i\}_{i=1}^\infty$ a sequence in $\mathcal{X}$. If there exists $0 \leq \lambda <1$  such that 
    \begin{equation*}
     \Big   \Vert \sum_{i=1}^n c_i ({\bf x}_i-{\bf y}_i)  \Big \Vert \leq \lambda  \Big \Vert \sum_{i=1}^n c_i {\bf x}_i \Big \Vert
    \end{equation*}
for all scalars $c_1, \cdots, c_n \ (n =1, 2, \cdots)$,  then $\{{\bf y}_i\}_{i=1}^\infty$ is also a basis for $\mathcal{X}$. 
\end{theorem}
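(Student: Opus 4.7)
The plan is to construct a bounded linear operator on $\mathcal{X}$ that carries $\{x_i\}$ onto $\{y_i\}$, and then show this operator is a topological isomorphism via a Neumann series argument. Concretely, I would define $T$ on the dense subspace of finite linear combinations of basis vectors by $T\bigl(\sum_{i=1}^n c_i x_i\bigr) = \sum_{i=1}^n c_i(x_i - y_i)$. The assignment is well-defined and linear because $\{x_i\}$ is a Schauder basis, so every such finite sum has unique coefficients. The hypothesis says exactly that $\|Tv\| \leq \lambda \|v\|$ on this dense subspace, so $T$ extends uniquely by continuity to a bounded operator on $\mathcal{X}$ with $\|T\| \leq \lambda < 1$.

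Next, since $\|T\| < 1$, the operator $I - T$ is invertible in the Banach algebra of bounded linear operators on $\mathcal{X}$, with inverse given by the Neumann series $(I - T)^{-1} = \sum_{k=0}^\infty T^k$. On finite linear combinations, direct computation gives $(I - T)\bigl(\sum c_i x_i\bigr) = \sum c_i y_i$, so in particular $(I - T) x_i = y_i$ for every $i$. Thus $I - T$ is a topological isomorphism of $\mathcal{X}$ sending the basis $\{x_i\}$ bijectively to the sequence $\{y_i\}$.

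It then remains to verify the basis expansion property for $\{y_i\}$. Given $x \in \mathcal{X}$, let $(c_i)$ be the unique scalars with $\sum_{i=1}^n c_i x_i \to (I - T)^{-1} x$; applying the continuous operator $I - T$ to the partial sums yields $\sum_{i=1}^n c_i y_i \to x$, giving existence of the expansion. For uniqueness, if $\sum c_i y_i$ converges to $0$, then applying $(I - T)^{-1}$ shows $\sum c_i x_i \to 0$, so the basis property of $\{x_i\}$ forces every $c_i$ to vanish. The only real subtlety is ensuring that $T$ extends continuously to all of $\mathcal{X}$; this uses precisely the density of finite linear combinations of basis vectors, which is part of the definition of a Schauder basis. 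Once this extension is justified, the remainder is the standard perturbation-by-Neumann-series argument, and the inequality $\|T\| < 1$ is exactly the quantitative hypothesis that powers it.
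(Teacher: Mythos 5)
Your proof is correct and complete: it is the standard Paley--Wiener/Boas argument, in which the hypothesis is read as $\Vert T v\Vert \leq \lambda \Vert v\Vert$ for the map $T\bigl(\sum c_i {\bf x}_i\bigr)=\sum c_i({\bf x}_i-{\bf y}_i)$ on the dense span of the basis, so that $I-T$ is invertible by a Neumann series and carries the basis $\{{\bf x}_i\}$ onto $\{{\bf y}_i\}$. Note that the paper itself states this theorem as a citation of Boas and gives no proof, but your route is exactly the operator-perturbation mechanism the paper relies on elsewhere (Lemma \ref{InverseType2} and Corollary \ref{extension} are precisely generalizations of the ``$\Vert I-U\Vert<1$ implies invertibility'' step you use), so your argument is both the classical one and the one most consonant with the paper's methods.
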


Since then, the Paley-Wiener perturbation theorem has been generalized to many related topics, including
entire functions of exponential type \cite{young2001introduction} and frames in Hilbert spaces \cite{christensen1995paley, christensen1995frame, cazassa1997perturbation} and their extensions, such as Banach frames  \cite{christensen1997perturbations}, g-frames \cite{sun2007stability}, and operator represented frames \cite{christensen2017operator}. As a generalization of orthonormal bases, frames were first introduced by Duffin and Schaeffer in the context of nonharmonic Fourier analysis \cite{duffin1952class} and have been applied in many areas, such as the Kadison-Singer problem \cite{casazza2013kadison},  time-frequency analysis\cite{grochenig2001foundations}, and wavelet analysis\cite{daubechies1992ten}.   

Recall that a sequence $\{{\bf f}_i\}_{i=1}^\infty$ in a separable Hilbert space $\mathcal{H}$ is called a \textit{frame} for $\mathcal{H}$ if  there exist $0<A \leq B < \infty$ (frame bounds) such that for any ${\bf f} \in \mathcal{H}$, 
\begin{equation*}
    A \Vert {\bf f}  \Vert^2 \leq\sum_{i =1}^{\infty} \vert \langle {\bf f},{\bf f}_i \rangle \vert ^2  \leq B\Vert {\bf f} \Vert^2,
\end{equation*} 
and $\{{\bf f}_i\}_{i=1}^\infty$ is called a tight frame if $A = B$ and Parseval if $A = B=1$. 
Compared to a basis in $\mathcal{H}$, a frame permits linear dependence between frame elements and allows each vector in $\mathcal{H}$ to be written as a linear combination of frame elements in a redundant way. Given a frame $\{{\bf f}_i\}_{i=1}^\infty$ in $\mathcal{H}$, the associated \textit{frame operator} ${\bf S}: \mathcal{H} \rightarrow \mathcal{H}$ is given by 
\begin{equation*}
     {\bf S}({\bf f}) = \sum_{i =1}^{\infty}  \langle {\bf f},{\bf f}_i \rangle  {\bf f}_i, \ \text{for any} \ {\bf f} \in \mathcal{H}.
\end{equation*}
It is well-known that ${\bf S}: \mathcal{H} \rightarrow \mathcal{H}$ is bounded, positive, self-adjoint, and invertible with a bounded inverse. In signal processing applications, a signal vector from $\mathcal{H}$ can be reconstructed using tight frames and dual frames. If $\{{\bf f}_i\}_{i=1}^\infty$ is a tight frame for $\mathcal{H}$ with bound $A>0$, then $\mathbf{S} =A \mathbf{Id}$ where $\mathbf{Id}$ is the identity operator on $\mathcal{H}$.   Thus for any ${\bf f} \in \mathcal{H}$, 
$${\bf f} = \frac{1}{A} \sum\limits_{i =1}^{\infty} \langle {\bf f},{\bf f}_i \rangle  {\bf f}_i.$$
The second way to reconstruct ${\bf f}$ is to use dual frames. A sequence $\{{\bf g}_i\}_{i=1}^\infty$ in $\mathcal{H}$ is called a \textit{dual frame} of the frame $\{{\bf f}_i\}_{i=1}^\infty$ if for any ${\bf f} \in \mathcal{H}$, 
\begin{equation*}
   {\bf f} = \sum_{i =1}^{\infty}  \langle {\bf f},{\bf g}_i \rangle  {\bf f}_i   = \sum_{i =1}^{\infty}  \langle {\bf f},{\bf f}_i \rangle  {\bf g}_i.
\end{equation*}
An example of dual frames is the \emph{canonical dual} $\{{\bf S}^{-1}{\bf f}_i\}_{i=1}^\infty$, where ${\bf S}$ is the frame operator of $\{{\bf f}_i\}_{i=1}^\infty$. When $\mathcal{H}$ is finite-dimensional, we get finite frames and the sums in the above definitions become finite. See \cite{christensen2016introduction} for more details on frames.

As a particular case of the Paley-Wiener perturbation theorem for frames, it is well-known (see, e.g.,  \cite{christensen1995frame}) that if a sequence $\{{\bf h}_i\}_{i=1}^\infty \subset \mathcal{H}$ is quadratically close to a frame $\{{\bf f}_i\}_{i=1}^\infty$ with bounds $0<A \leq B$, that is, if
$$K:=\sum\limits_{i=1}^\infty  \Vert  {\bf f}_i-{\bf h}_i  \Vert^2 < A,$$
then $\{{\bf h}_i\}_{i=1}^\infty$ is a frame for $\mathcal{H}$ with bounds $(\sqrt{A}-\sqrt{K})^2$ and $(\sqrt{B}+\sqrt{K})^2$. 

In this work, we study perturbations of probabilistic frames. As introduced in \cite{ehler2012random}, developed in \cite{ehler2012minimization}, and further reviewed in \cite{ehler2013probabilistic}, a probability measure $\mu$ on $\mathbb{R}^n$ is called a \textit{probabilistic frame} if there exist $0<A \leq B$ such that for any ${\bf x} \in \mathbb{R}^n$, 
\begin{equation*}
    A \Vert {\bf x }  \Vert^2 \leq \int_{\mathbb{R}^n} \vert \langle {\bf x},{\bf y} \rangle \vert ^2 d\mu({\bf y})  \leq B\Vert {\bf x}  \Vert^2.
\end{equation*}
It has been shown that the set of probabilistic frames is open in the $2$-Wasserstein topology \cite{chen2025probabilistic}, meaning that if a probability measure is close enough to a probabilistic frame in the $2$-Wasserstein metric, this measure is also a probabilistic frame. 

One of the main contributions in this work is that we also find explicit frame bounds for such probability measures in Corollary \ref{openness}, which reestablishes the openness of the set of probabilistic frames. The following proposition is a quadratic-closeness perturbation of probabilistic frames, where $\mathcal{P}_2(\mathbb{R}^n)$ is the set of probability measures on $\mathbb{R}^n$ with finite second moments and $M_2(\nu)$ is the second moment of $\nu$.   
\begin{proposition} \label{QuadClose}
Let $\mu$ be a probabilistic frame with lower bound  $A>0$ and let $\nu \in \mathcal{P}_2(\mathbb{R}^n)$. Suppose there exists a transport coupling $\gamma \in \Gamma(\mu, \nu)$ such that
\begin{equation*}
   \lambda:= \int_{\mathbb{R}^n  \times \mathbb{R}^n} \Vert {\bf x}-{\bf y} \Vert^2  d \gamma({\bf x}, {\bf y}) < A,
\end{equation*}
then $\nu$ is a probabilistic frame with bounds $(\sqrt{A}-\sqrt{\lambda})^2 $ and $M_2(\nu)$. 
\end{proposition}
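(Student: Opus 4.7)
The plan is to verify the lower and upper frame bounds for $\nu$ independently, using the coupling $\gamma$ as a bridge between integrals against $\mu$ and against $\nu$. The key observation is that any function on $\mathbb{R}^n\times\mathbb{R}^n$ depending only on the first variable integrates against $\gamma$ to a $\mu$-integral, and similarly for the second variable and $\nu$. Consequently, both quadratic forms $\int \langle{\bf x},{\bf u}\rangle^2\,d\mu({\bf u})$ and $\int \langle{\bf x},{\bf y}\rangle^2\,d\nu({\bf y})$ can be realized as squared norms of functions sitting in the single Hilbert space $L^2(\gamma)$, and the triangle inequality there will do the bulk of the work.

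For the lower bound, I would fix ${\bf x}\in\mathbb{R}^n$ and regard $({\bf u},{\bf y})\mapsto\langle{\bf x},{\bf u}\rangle$ and $({\bf u},{\bf y})\mapsto\langle{\bf x},{\bf y}\rangle$ as elements of $L^2(\gamma)$. The frame inequality for $\mu$ gives that the $L^2(\gamma)$-norm of the first is at least $\sqrt{A}\,\Vert{\bf x}\Vert$, while Cauchy--Schwarz controls their difference since
\begin{equation*}
\int_{\mathbb{R}^n\times\mathbb{R}^n}\langle{\bf x},{\bf u}-{\bf y}\rangle^2\,d\gamma({\bf u},{\bf y})\le\Vert{\bf x}\Vert^2\lambda.
\end{equation*}
The reverse triangle inequality in $L^2(\gamma)$, followed by squaring, then produces the claimed lower bound $(\sqrt{A}-\sqrt{\lambda})^2\Vert{\bf x}\Vert^2$; the hypothesis $\lambda<A$ is exactly what is needed to keep this quantity strictly positive.

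For the upper bound, Cauchy--Schwarz gives $\int \langle{\bf x},{\bf y}\rangle^2\,d\nu({\bf y}) \le M_2(\nu)\Vert{\bf x}\Vert^2$ immediately, so the only remaining step is to verify that $M_2(\nu)<\infty$. This follows by applying Minkowski's inequality in $L^2(\gamma)$ to the decomposition ${\bf y}={\bf u}+({\bf y}-{\bf u})$, which yields $\sqrt{M_2(\nu)}\le\sqrt{M_2(\mu)}+\sqrt{\lambda}$, together with the standard fact that the upper frame bound $B$ forces $M_2(\mu)<\infty$ (test the upper frame inequality against an orthonormal basis and sum).

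I do not expect a serious obstacle; the argument is a direct transcription of the classical Hilbert-space quadratically-close perturbation result to the probabilistic setting, with $\gamma$ playing the role of a bookkeeping device that lets one variable range over $\mu$ and the other over $\nu$ simultaneously. The only place requiring care is making sure the coupling is used consistently on both marginals and that $M_2(\nu)<\infty$ is established before invoking Cauchy--Schwarz. The operator-theoretic route hinted at in the introduction would equivalently bound $\Vert S_\nu-S_\mu\Vert$ through $\gamma$ and deduce the positivity of $S_\nu$ via a Neumann-type argument, but the $L^2(\gamma)$ triangle inequality seems more direct and yields the stated constants on the nose.
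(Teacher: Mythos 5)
Your proof is correct, but it takes a genuinely different route from the paper's. The paper establishes the lower bound operator-theoretically: it defines $L({\bf f}) = \int \langle {\bf f}, {\bf S}_\mu^{-1}{\bf x}\rangle\, {\bf y}\, d\gamma({\bf x},{\bf y})$, uses the canonical dual reconstruction and Cauchy--Schwarz to show $\Vert {\bf f} - L({\bf f})\Vert \le \sqrt{\lambda/A}\,\Vert {\bf f}\Vert$, inverts $L$ by a Neumann-type argument, and then extracts the bound $(\sqrt{A}-\sqrt{\lambda})^2$ from the identity ${\bf f} = LL^{-1}{\bf f}$ via Cauchy--Schwarz in $L^2(\gamma)$. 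Your argument bypasses the operator entirely: viewing $\langle {\bf x},\cdot\rangle$ in the first and second coordinates as two elements of $L^2(\gamma)$, bounding their distance by $\sqrt{\lambda}\,\Vert{\bf x}\Vert$, and applying the reverse triangle inequality gives the same constant on the nose, with $\lambda<A$ guaranteeing positivity. Your treatment of $M_2(\nu)<\infty$ via Minkowski's inequality in $L^2(\gamma)$ is also cleaner than the paper's expansion $\lambda = M_2(\mu)+M_2(\nu)-2\int\langle{\bf x},{\bf y}\rangle\,d\gamma$. What your shorter route gives up is uniformity and byproducts: the operator $L$ is the template the paper reuses for Theorem 3.7, Proposition 3.8, and Proposition 3.10, and its invertibility yields an explicit reconstruction formula ${\bf f} = \int \langle L^{-1}{\bf f}, {\bf S}_\mu^{-1}{\bf x}\rangle\,{\bf y}\,d\gamma$ exhibiting a dual-type relation between $\mu$ and $\nu$, which the triangle-inequality argument does not produce.
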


The following corollary is clear if $\gamma \in \Gamma(\mu, \nu)$ in Proposition \ref{QuadClose} is optimal for the $2$-Wasserstein distance $W_2(\mu, \nu)$.

\begin{corollary}\label{openness}
    Let $\mu$ be a probabilistic frame with lower bound  $A>0$ and let $\nu \in \mathcal{P}_2(\mathbb{R}^n)$. Suppose $W_2(\mu, \nu) < \sqrt{A}$,
then $\nu$ is a probabilistic frame with bounds $(\sqrt{A}-W_2(\mu, \nu))^2 $ and $M_2(\nu)$. Furthermore, the set of probabilistic frames forms an open subset of $\mathcal{P}_2(\mathbb{R}^n)$ endowed with the  $2$-Wasserstein topology.
\end{corollary}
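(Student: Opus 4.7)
The plan is to deduce Corollary \ref{openness} directly from Proposition \ref{QuadClose} by feeding it the optimal transport coupling. First I would check that $W_2(\mu,\nu)$ is finite and attained. Since $\mu$ is a probabilistic frame, applying the upper frame bound to each standard basis vector $e_i$ and summing over $i$ gives $M_2(\mu)=\int \Vert {\bf y}\Vert^2 d\mu({\bf y}) \leq nB < \infty$, so $\mu \in \mathcal{P}_2(\mathbb{R}^n)$. Together with the assumption $\nu \in \mathcal{P}_2(\mathbb{R}^n)$, standard optimal transport theory (Brenier / Kantorovich) guarantees the existence of an optimal coupling $\gamma^\ast \in \Gamma(\mu,\nu)$ with
\begin{equation*}
\int_{\mathbb{R}^n\times \mathbb{R}^n} \Vert {\bf x}-{\bf y}\Vert^2 d\gamma^\ast({\bf x},{\bf y}) = W_2(\mu,\nu)^2.
\end{equation*}

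Next I would simply specialize Proposition \ref{QuadClose} to $\gamma = \gamma^\ast$. Setting $\lambda := W_2(\mu,\nu)^2$, the hypothesis $W_2(\mu,\nu) < \sqrt{A}$ immediately gives $\lambda < A$, so the proposition applies and produces the frame bounds $(\sqrt{A}-\sqrt{\lambda})^2 = (\sqrt{A}-W_2(\mu,\nu))^2$ and $M_2(\nu)$, which is precisely the claimed conclusion.

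For the openness statement, the content of the first half of the corollary is exactly that the open $W_2$-ball $\{\nu \in \mathcal{P}_2(\mathbb{R}^n) : W_2(\mu,\nu)<\sqrt{A}\}$ is contained in the set of probabilistic frames, whenever $\mu$ is a probabilistic frame with lower bound $A>0$. Since the set of probabilistic frames is a subset of $\mathcal{P}_2(\mathbb{R}^n)$ (again by the upper bound computation above), this open ball witnesses openness at $\mu$, and since $\mu$ was arbitrary, the whole set is open in the 2-Wasserstein topology.

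There is no real obstacle here; the corollary is essentially a rephrasing of Proposition \ref{QuadClose} once one recognizes that among all couplings the optimal one minimizes $\lambda$, so plugging it in yields the strongest (and cleanest, $W_2$-intrinsic) lower frame bound. The only substantive input beyond the proposition is the standard existence of $L^2$-optimal couplings on Euclidean space.
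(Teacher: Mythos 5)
Your proposal is correct and follows exactly the paper's argument: plug the optimal coupling $\gamma^\ast$ into Proposition \ref{QuadClose} so that $\lambda = W_2(\mu,\nu)^2 < A$, yielding the stated bounds, and then observe that the ball $\{\nu : W_2(\mu,\nu)<\sqrt{A}\}$ witnesses openness. The extra details you supply (finiteness of $M_2(\mu)$ via the upper frame bound and the standard existence of an optimal coupling) are correct and merely make explicit what the paper leaves implicit.
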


This paper is organized as follows. In Section \ref{preliminaries}, we introduce probabilistic frames, optimal transport, and the invertibility of linear operators on Banach spaces.  
In Section \ref{FramePerturbation}, we generalize the Paley-Wiener perturbation theorem to probabilistic frames. In particular, we give sufficient perturbation conditions using probabilistic dual frames. Finally, in Section \ref{sec:proof}, we prove Proposition \ref{QuadClose}.

\section{Mathematical Preliminaries}\label{preliminaries}

This section begins by recalling probabilistic frames and optimal transport. Throughout the paper, we use ${\bf A}$ and ${\bf x}$ to denote a real matrix and a vector in the Euclidean space $\mathbb{R}^n$, and $A$ and $x$ to represent real numbers. In particular, ${\bf 0}$ is used to denote the zero vector in $\mathbb{R}^n$, ${\bf 0}_{n \times n}$ the $n \times n$ zero matrix, and $0$ the number zero. We use ${\bf Id}$ for the $n \times n$ identity matrix, and  ${\bf A}^t$ for the transpose of the matrix ${\bf A}$. 

\subsection{Probabilistic Frames and Optimal Transport}
Let $\mathcal{P}(\mathbb{R}^n)$ be the set of Borel probability measures on $\mathbb{R}^n$. For $\mu \in \mathcal{P}(\mathbb{R}^n)$, we define its second moment as 
$$M_2(\mu) := \int_{\mathbb{R}^n} \Vert {\bf x} \Vert ^2 d\mu({\bf x}).$$
Let
$\mathcal{P}_2(\mathbb{R}^n) \subset \mathcal{P}(\mathbb{R}^n)$ be the set of probability measures on $\mathbb{R}^n$ with finite second moments, that is, 
\begin{equation*}
\mathcal{P}_2(\mathbb{R}^n) := \left \{ \mu \in \mathcal{P}(\mathbb{R}^n):  \int_{\mathbb{R}^n} \Vert {\bf x} \Vert ^2 d\mu({\bf x}) < + \infty \right \}.
\end{equation*}
In addition, we define the \textit{support} of $\mu \in  \mathcal{P}(\mathbb{R}^n)$ as 
$$\supp(\mu) := \{ {\bf x} \in \mathbb{R}^n: \mu(B_r({\bf x}))>0 \ \text{for all} \ r>0 \},$$
where $B_r({\bf x})$ is the open ball centered at ${\bf x}$ with radius $r>0$.
If $\mu \in  \mathcal{P}(\mathbb{R}^n) $ and $f: \mathbb{R}^n \rightarrow \mathbb{R}^m$ is a Borel measurable map where $n$ may differ from $m$, then $f_{\#} \mu \in \mathcal{P}(\mathbb{R}^m)$ is called the \textit{pushforward} of $\mu$ by the map $f$,  and is defined as
$$f_{\#} \mu (E) := \mu \big (f^{-1}(E) \big ), \ \text{for any Borel set}  \ E \subset \mathbb{R}^m.$$ If $f$ is linear and represented by a matrix ${\bf A}$ with respect to some basis, then ${\bf A}_{\#} \mu$ is used to denote $f_{\#} \mu$. In particular, $(\mathbf{Id}, f)$ is used to denote a map from $\mathbb{R}^n$ to $\mathbb{R}^n \times \mathbb{R}^m$ via $\mathbf{x} \mapsto (\mathbf{x}, f(\mathbf{x}))$. In this case, $(\mathbf{Id}, f)_\#\mu$ is a probability measure on $\mathbb{R}^n \times \mathbb{R}^m$ that is supported on the graph of $f$.

Note that if $\{{\bf y}_i\}_{i=1}^N$ is a frame for $\mathbb{R}^n$ with bounds $0<A \leq B$ where $N \geq n$, then by letting $\mu_f := \frac{1}{N} \sum\limits_{i=1}^N  \delta_{{\bf y}_i} \in \mathcal{P}(\mathbb{R}^n)$, the frame inequality of $\{{\bf y}_i\}_{i=1}^N$ becomes
\begin{equation*}
    \frac{A}{N} \Vert {\bf x}  \Vert^2 \leq\int_{\mathbb{R}^n} \vert \langle {\bf x},{\bf y} \rangle \vert ^2 d\mu_f({\bf y})  \leq \frac{B}{N} \Vert {\bf x}  \Vert^2, \ \text{for any} \ {\bf x} \in \mathbb{R}^n.
\end{equation*}
This means that finite frames can be viewed as discrete probabilistic measures. 
Inspired by this observation, Martin Ehler \cite{ehler2012random} introduced the concept of probabilistic frames.
Subsequently, Martin Ehler and Kasso A. Okoudjou investigated probabilistic frame potential \cite{ehler2012minimization} and later surveyed this area,  with particular emphasis on its connections to optimal transport \cite{ehler2013probabilistic}.

\begin{definition}
$\mu \in  \mathcal{P}(\mathbb{R}^n)$ is called a \textit{probabilistic frame} for $\mathbb{R}^n$ if there exist $0<A \leq B < \infty$ (frame bounds) such that for any ${\bf x} \in \mathbb{R}^n$, 
\begin{equation*}
    A \Vert {\bf x}  \Vert^2 \leq\int_{\mathbb{R}^n} \vert \langle {\bf x},{\bf y} \rangle \vert ^2 d\mu({\bf y})  \leq B \Vert {\bf x}  \Vert^2.
\end{equation*}
$\mu$ is called a tight probabilistic frame if $A = B$ and Parseval if $A = B=1$. Moreover, $\mu$ is said to be a Bessel probability measure if only the upper bound holds. 
\end{definition}

If $\mu \in  \mathcal{P}_2(\mathbb{R}^n)$, then the Cauchy-Schwarz inequality implies that $\mu$ is Bessel with bound $M_2(\mu) $. Furthermore, one can define the \textit{frame operator} ${\bf S}_\mu$ for $\mu$  as 
    \begin{equation*}
          {\bf S}_\mu := \int_{\mathbb{R}^n}  {\bf y} {\bf y}^t d \mu({\bf y}).
    \end{equation*}
It has been shown that frame operators can characterize probabilistic frames.

 \begin{proposition}[Theorem 12.1 in \cite{ehler2013probabilistic}, Proposition 3.1 in \cite{maslouhi2019probabilistic}] \label{TAcharacterization}
Let $\mu  \in \mathcal{P}(\mathbb{R}^n)$. 
\begin{itemize}
    \item[$(1)$] $\mu$ is a probabilistic frame  $\Leftrightarrow$ $\mu \in \mathcal{P}_2(\mathbb{R}^n)$ and $\spanning\{\supp(\mu)\} = \mathbb{R}^n$.
    
     \item[$(2)$] If $\mu  \in \mathcal{P}_2(\mathbb{R}^n)$, then $\mu$ is a probabilistic frame   $\Leftrightarrow$ ${\bf S}_{\mu}$ is positive definite; and $\mu$ is a tight probabilistic frame with bound $A > 0$ $\Leftrightarrow$ ${\bf S}_{\mu} = A {\bf Id}$. 
\end{itemize}
\end{proposition}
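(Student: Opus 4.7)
The plan is to translate the frame inequality into a spectral statement about the symmetric matrix $\mathbf{S}_\mu$, then identify the positive-definiteness of $\mathbf{S}_\mu$ with a non-degeneracy condition on $supp(\mu)$.

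The central identity I would record first is that for every $\mathbf{x} \in \mathbb{R}^n$,
\begin{equation*}
\langle \mathbf{S}_\mu \mathbf{x}, \mathbf{x}\rangle \;=\; \int_{\mathbb{R}^n} \langle \mathbf{y}\mathbf{y}^t \mathbf{x}, \mathbf{x}\rangle\, d\mu(\mathbf{y}) \;=\; \int_{\mathbb{R}^n} |\langle \mathbf{x}, \mathbf{y}\rangle|^2\, d\mu(\mathbf{y}),
\end{equation*}
so the frame inequality is literally the Rayleigh quotient bound $A\|\mathbf{x}\|^2 \le \langle \mathbf{S}_\mu \mathbf{x},\mathbf{x}\rangle \le B\|\mathbf{x}\|^2$. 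Before using this, I need $\mathbf{S}_\mu$ to be well-defined, i.e. $\mu\in\mathcal{P}_2(\mathbb{R}^n)$. If $\mu$ is a probabilistic frame with upper bound $B$, summing the inequality over any orthonormal basis $\{e_i\}$ gives $M_2(\mu) = \sum_i \int |\langle e_i,\mathbf{y}\rangle|^2\, d\mu(\mathbf{y}) \le nB$, hence $\mu \in \mathcal{P}_2$. Conversely, for $\mu\in\mathcal{P}_2$ the Cauchy–Schwarz inequality gives the upper bound $M_2(\mu)$ for free, so the existence of an upper frame bound is equivalent to $\mu\in\mathcal{P}_2$.

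For the equivalence in (1), once $\mu\in\mathcal{P}_2$ the matrix $\mathbf{S}_\mu$ is symmetric and positive semi-definite. By compactness of the unit sphere and continuity of $\mathbf{x}\mapsto \langle \mathbf{S}_\mu\mathbf{x},\mathbf{x}\rangle$, the existence of a positive lower frame bound $A>0$ is equivalent to $\langle \mathbf{S}_\mu\mathbf{x},\mathbf{x}\rangle>0$ for all $\mathbf{x}\neq 0$, i.e. to $\mathbf{S}_\mu>0$. To connect with the span condition, observe that $\mathbf{S}_\mu\mathbf{x}=0$ forces $\langle \mathbf{S}_\mu\mathbf{x},\mathbf{x}\rangle = \int |\langle \mathbf{x},\mathbf{y}\rangle|^2\, d\mu(\mathbf{y}) = 0$; since the integrand is continuous and non-negative, this forces $\langle \mathbf{x},\mathbf{y}\rangle = 0$ for every $\mathbf{y}\in supp(\mu)$, hence $supp(\mu)\subset \mathbf{x}^\perp$. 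Thus $\ker \mathbf{S}_\mu\neq\{0\}$ iff $supp(\mu)$ lies in some proper hyperplane iff $span\{supp(\mu)\}\neq \mathbb{R}^n$, giving the last equivalence.

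For part (2), if $\mathbf{S}_\mu = A\,\mathbf{Id}$ the Rayleigh identity above immediately yields tightness with bound $A$. Conversely, tightness says $\langle (\mathbf{S}_\mu - A\,\mathbf{Id})\mathbf{x},\mathbf{x}\rangle = 0$ for every $\mathbf{x}$; polarization (or the fact that a symmetric matrix vanishing on the diagonal of its quadratic form is zero) forces $\mathbf{S}_\mu - A\,\mathbf{Id}=0$. The only point that requires some care is the support equivalence in the final step of (1): the passage from "$\int |\langle \mathbf{x},\mathbf{y}\rangle|^2\,d\mu = 0$" to "$supp(\mu)\subset \mathbf{x}^\perp$" uses that the non-negative continuous integrand must vanish on $supp(\mu)$, which I would state explicitly as the main (and essentially only) non-cosmetic argument in the proof.
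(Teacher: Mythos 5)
The paper does not prove this proposition itself --- it is quoted from \cite{ehler2013probabilistic} and \cite{maslouhi2019probabilistic} --- so there is no internal proof to compare against; your argument is correct and is essentially the standard one underlying those references: the identity $\langle \mathbf{S}_\mu \mathbf{x},\mathbf{x}\rangle=\int |\langle \mathbf{x},\mathbf{y}\rangle|^2\,d\mu(\mathbf{y})$ turns the frame inequality into a Rayleigh-quotient bound, finite-dimensional compactness gives the equivalence of the lower bound with $\mathbf{S}_\mu>0$, and the vanishing of a nonnegative continuous integrand on $supp(\mu)$ gives the span characterization. All steps, including the trace argument showing an upper frame bound forces $M_2(\mu)\le nB$ and the polarization step in part (2), are sound.
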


Since probabilistic frames lie in $\mathcal{P}_2(\mathbb{R}^n)$, they can be studied using optimal transport and the $2$-Wasserstein distance \cite{ehler2013probabilistic, wickman2014optimal}. Given two probabilistic frames $\mu$ and $\nu$, let $\Gamma(\mu,\nu)$ be the set of transport couplings with marginals $\mu$ and $\nu$, that is,
$$ \Gamma(\mu,\nu) :=  \left \{ \gamma \in \mathcal{P}(\mathbb{R}^n \times \mathbb{R}^n): {\pi_{{ x}}}_{\#} \gamma = \mu, \ {\pi_{{ y}}}_{\#} \gamma = \nu \right \},$$
where $\pi_{{ x}}$, $\pi_{{ y}}$ are projections onto the ${\bf x}$ and ${\bf y}$ coordinates: for any $({\bf x},{\bf y}) \in \mathbb{R}^n \times \mathbb{R}^n $, $\pi_{{ x}}({\bf x}, {\bf y}) = {\bf x}$ and $ \pi_{{ y}}({\bf x},{\bf y}) = {\bf y}$. The $2$-Wasserstein distance $ W_2(\mu,\nu)$ is often used to quantify the distance between probabilistic frames $\mu$ and $\nu$:  
$$ W_2(\mu,\nu) := \left (\underset{\gamma \in \Gamma(\mu,\nu)}{\inf}  \int_{\mathbb{R}^n \times \mathbb{R}^n}  \left \Vert {\bf x}-{\bf y}\right \Vert^2 \ d\gamma({\bf x},{\bf y}) \right)^{1/2}.$$
This is of interest even if we are only concerned with finite frames, since the $2$-Wasserstein distance can quantify the distance between probabilistic frames induced by finite frames of different cardinalities. If $\gamma^* \in \Gamma(\mu, \nu)$ is an optimizer for $ W_2(\mu,\nu)$, then $\gamma^*$ is called an optimal transport coupling for $ W_2(\mu,\nu)$.

Similar to frames, one can reconstruct a signal using tight probabilistic frames and \textit{probabilistic dual frames}. It has been shown that every probabilistic dual frame (also known as transport dual) is a probabilistic frame \cite{wickman2014optimal, wickman2017duality}.

\begin{definition}
Let $\mu$ be a probabilistic frame for $\mathbb{R}^n$. Then $\nu \in \mathcal{P}_2(\mathbb{R}^n)$ is called a \textit{probabilistic dual frame} of $\mu$ if there exists $\gamma \in \Gamma(\mu, \nu)$ such that
\begin{equation*}
  \int_{\mathbb{R}^n \times \mathbb{R}^n} {\bf x}{\bf y}^t d\gamma({\bf x}, {\bf y}) = {\bf Id}.
\end{equation*}
\end{definition}

If $\mu$ is a probabilistic frame with bounds $0<A \leq B$, then ${{\bf S}_\mu^{-1}}_{\#}\mu$ is called the \emph{canonical probabilistic dual frame} of $\mu$ (with bounds $\frac{1}{B}$ and $ \frac{1}{A}$), because one can let $\gamma$ be ${({\bf Id}, {\bf S}_\mu^{-1})}_{\#}\mu \in \Gamma(\mu, {{\bf S}_\mu^{-1}}_{\#}\mu)$ in the above definition of probabilistic dual frame. In this case, we have the following  reconstruction: for any ${\bf f} \in \mathbb{R}^n$, 
\begin{equation}\label{reconstruction2}
   {\bf f}=   \int_{\mathbb{R}^n} \langle  {\bf f}, {\bf S}_\mu^{-1} {\bf x} \rangle {\bf x}  d\mu({\bf x}) =\int_{\mathbb{R}^n} \langle  {\bf S}_\mu^{-1} {\bf f}, {\bf x} \rangle {\bf x}  d\mu({\bf x}). 
\end{equation}
Another special frame related to $\mu$ is the \emph{canonical Parseval probabilistic frame} ${{\bf S}_{\mu}^{-1/2}}_{\#}\mu$, and the associated reconstruction is given as follows: for any ${\bf f} \in \mathbb{R}^n$, 
\begin{equation}\label{reconstruction1}
   {\bf f}=\int_{\mathbb{R}^n} \langle {\bf f}, {\bf S}_\mu^{-1/2} {\bf x} \rangle {\bf S}_\mu^{-1/2} {\bf x}   d\mu({\bf x})= \int_{\mathbb{R}^n} \langle {\bf S}_\mu^{-1/2} {\bf f}, {\bf x} \rangle {\bf S}_\mu^{-1/2} {\bf x}   d\mu({\bf x}).
\end{equation}

For interested readers, see \cite{ehler2012random, ehler2011frame, chen2025probabilistic, maslouhi2019probabilistic, ehler2013probabilistic, wickman2017duality, ehler2012minimization, wickman2014optimal,cheng2019optimal, loukili2020minimization, wickman2023gradient} for more details on probabilistic frames and \cite{figalli2021invitation} for optimal transport.

We conclude this subsection with basic properties of the frame operator. If $\mu$ is a probabilistic frame with bounds $0< A \leq B$, then the associated frame inequality shows that every eigenvalue of ${\bf S}_\mu$ is between $A$ and $B$.  Now let $\Vert {\bf S}_\mu \Vert_2$ be the 2-matrix norm of ${\bf S}_\mu$, which equals the largest eigenvalue of ${\bf S}_{\mu}$. Then
\begin{equation*}
   A \leq  \Vert {\bf S}_\mu \Vert_2 \leq B, \  \frac{1}{B} \leq  \Vert {\bf S}_\mu^{-1} \Vert_2 \leq \frac{1}{A},  \ \text{and} \  \frac{1}{\sqrt{B}} \leq  \Vert {\bf S}_\mu^{-1/2} \Vert_2 \leq \frac{1}{\sqrt{A}}.
\end{equation*}

\subsection{Invertibility of Linear Operators on Banach Spaces} This subsection introduces the invertibility of linear operators on Banach spaces.  It is well-known that a bounded linear operator $U$ on a Banach space $\mathcal{X}$ is invertible and $ \Vert U^{-1} \Vert \leq \frac{1}{1-\Vert I-U \Vert }$ if $\Vert I-U \Vert <1$,
where $I$  is the identity operator on $\mathcal{X}$. Peter G. Casazza and Ole Christensen generalized this result to the following broader setting.
\begin{lemma}[Lemma 1 in \cite{cazassa1997perturbation}]\label{InverseType2}
    Let $\mathcal{X}$ be a Banach space and  $U: \mathcal{X} \rightarrow \mathcal{X} $ be a linear operator on $\mathcal{X}$. If there exist  $\lambda_1, \lambda_2 \in [0,1)$ such that for any ${\bf x} \in \mathcal{X}$, 
    \begin{equation*}
        \Vert U{\bf x} -{\bf x} \Vert \leq \lambda_1 \Vert {\bf x} \Vert + \lambda_2 \Vert U{\bf x} \Vert,
    \end{equation*}
then $U$ is bounded and invertible, and for any ${\bf x} \in \mathcal{X}$, 
\begin{equation*}
    \frac{1-\lambda_1}{1+\lambda_2} \Vert {\bf x} \Vert \leq \Vert U{\bf x} \Vert  \leq \frac{1+\lambda_1}{1-\lambda_2} \Vert {\bf x} \Vert \ \text{and} \ \frac{1-\lambda_2}{1+\lambda_1} \Vert {\bf x} \Vert \leq \Vert U^{-1}{\bf x} \Vert  \leq \frac{1+\lambda_2}{1-\lambda_1} \Vert {\bf x} \Vert.
\end{equation*}
\end{lemma}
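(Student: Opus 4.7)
The plan is to extract a two-sided bound on $\Vert U{\bf x}\Vert$ from the hypothesis, use it to deduce boundedness and injectivity with closed range, and then lift this to full invertibility together with the stated bounds on $U^{-1}$.

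Step 1 (two-sided bound on $\Vert U{\bf x}\Vert$). The reverse triangle inequality $|\,\Vert U{\bf x}\Vert-\Vert{\bf x}\Vert\,|\le \Vert U{\bf x}-{\bf x}\Vert$ combined with the hypothesis yields the pair of scalar inequalities $\pm(\Vert U{\bf x}\Vert-\Vert{\bf x}\Vert)\le \lambda_1\Vert{\bf x}\Vert+\lambda_2\Vert U{\bf x}\Vert$. Solving each sign for $\Vert U{\bf x}\Vert$, and using $\lambda_2<1$ to ensure $1-\lambda_2>0$, I get
\[
\frac{1-\lambda_1}{1+\lambda_2}\Vert{\bf x}\Vert\le \Vert U{\bf x}\Vert\le \frac{1+\lambda_1}{1-\lambda_2}\Vert{\bf x}\Vert.
\]
The upper bound makes $U$ a bounded linear operator on $\mathcal{X}$; the strictly positive lower bound makes $U$ injective and forces $U(\mathcal{X})$ to be closed, since a Cauchy sequence $\{U{\bf x}_n\}$ pulls back via the lower bound to a Cauchy sequence $\{{\bf x}_n\}$, which converges by completeness and whose image is the limit.

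Step 2 (surjectivity). The plan is to show $U(\mathcal{X})$ is dense, after which Step 1 forces $U(\mathcal{X})=\mathcal{X}$. Given $\phi\in\mathcal{X}^*$ annihilating $U(\mathcal{X})$, the identity $\phi({\bf x})=\phi({\bf x})-\phi(U{\bf x})=\phi((I-U){\bf x})$ iterates to $\phi({\bf x})=\phi((I-U)^n{\bf x})$ for every $n$. A careful iterative use of the hypothesis is then required to drive $\phi((I-U)^n{\bf x})\to 0$, after which Hahn--Banach gives density.

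Step 3 (bounds on the inverse). With $U$ a bounded bijection of the Banach space $\mathcal{X}$, the open mapping theorem delivers continuity of $U^{-1}$, and substituting ${\bf y}=U{\bf x}$ in the inequality of Step 1 directly reads off
\[
\frac{1-\lambda_2}{1+\lambda_1}\Vert{\bf y}\Vert\le \Vert U^{-1}{\bf y}\Vert\le \frac{1+\lambda_2}{1-\lambda_1}\Vert{\bf y}\Vert.
\]

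The principal difficulty is Step 2: in a general Banach space, bounded below does not imply surjective, and the coarse estimate $\Vert I-U\Vert\le (\lambda_1+\lambda_2)/(1-\lambda_2)$ obtainable from Step 1 may exceed $1$, so the Neumann series for $U=I-(I-U)$ is not immediately available. Handling this refinement --- either by iterating the operator-level inequality more finely than the norm bound, or by constructing $U^{-1}{\bf y}$ directly through a convergent iteration built from the bounded left inverse of $U$ on $U(\mathcal{X})$ --- is where the technical work concentrates.
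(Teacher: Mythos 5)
Your Step 1 and Step 3 are correct and complete, and they recover the four displayed inequalities exactly as stated. Note that the paper itself does not prove this lemma at all --- it imports it verbatim as Lemma 1 of Casazza--Christensen --- so the only question is whether your argument stands on its own. It does not: Step 2, which is the entire nonelementary content of the lemma (a bounded-below operator on a Banach space need not be surjective), is left as a sketch, and the specific mechanism you sketch cannot be made to work. You propose to take $\phi\in\mathcal{X}^*$ vanishing on $U(\mathcal{X})$, iterate $\phi({\bf x})=\phi((I-U)^n{\bf x})$, and ``drive $\phi((I-U)^n{\bf x})\to 0$.'' The only estimate available on $(I-U)^n$ from the hypothesis is $\Vert I-U\Vert\le(\lambda_1+\lambda_2)/(1-\lambda_2)$, which you yourself note may exceed $1$; worse, $(I-U)^n{\bf x}$ can genuinely blow up under the hypotheses. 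For example $U=3I$ satisfies $\Vert U{\bf x}-{\bf x}\Vert=2\Vert{\bf x}\Vert\le 0.9\Vert{\bf x}\Vert+0.9\Vert U{\bf x}\Vert$, so $\lambda_1=\lambda_2=0.9$ is admissible, yet $(I-U)^n{\bf x}=(-2)^n{\bf x}$. And for $\phi$ annihilating the range one has $\phi((I-U){\bf z})=\phi({\bf z})$ identically, so no refined cancellation is available along this route: the iteration returns exactly the identity you started from. Your alternative suggestion (an iteration built from the left inverse $W$ of $U$ on $U(\mathcal{X})$) also stalls at the first step, because $W$ cannot be applied to a general ${\bf y}$ not yet known to lie in $U(\mathcal{X})$.

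The standard way to close this gap is the method of continuity. Set $U_t:=(1-t)I+tU$ for $t\in[0,1]$. From $tU{\bf x}=U_t{\bf x}-(1-t){\bf x}$ one gets
\begin{equation*}
\Vert U_t{\bf x}-{\bf x}\Vert=t\,\Vert U{\bf x}-{\bf x}\Vert\le\bigl(t\lambda_1+(1-t)\lambda_2\bigr)\Vert{\bf x}\Vert+\lambda_2\Vert U_t{\bf x}\Vert,
\end{equation*}
so every $U_t$ satisfies a hypothesis of the same form with constants at most $\max(\lambda_1,\lambda_2)<1$, hence by your Step 1 is injective with closed range and bounded below by a constant $c>0$ independent of $t$. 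The set of $t$ for which $U_t$ is surjective contains $t=0$, is open (if $U_{t_0}$ is invertible with $\Vert U_{t_0}^{-1}\Vert\le 1/c$, then $U_t$ is invertible whenever $|t-t_0|\,\Vert U-I\Vert<c$), and is closed (solve $U_{t_n}{\bf x}_n={\bf y}$, use the uniform lower bound to see $U_t{\bf x}_n\to{\bf y}$, and invoke closedness of the range of $U_t$). Hence $U=U_1$ is surjective, and your Step 3 then finishes the proof. Without some argument of this kind your proposal establishes only that $U$ is a bounded isomorphism onto its closed range, which is strictly weaker than the lemma.
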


The following corollary concerns the extension of a linear operator on a dense subspace to a bounded linear operator on the whole space.

\begin{corollary}[Remark following Corollary 1 in \cite{cazassa1997perturbation}]\label{extension}
   Suppose $\mathcal{X}$ and $\mathcal{Y}$ are Banach spaces. Let $U:\mathcal{X} \rightarrow \mathcal{Y}$ be a bounded linear operator, $\mathcal{X}_0$ a dense subspace of $\mathcal{X}$, and $T:\mathcal{X}_0 \rightarrow \mathcal{Y}$ a linear mapping. If there exist $\delta, \lambda_1 \geq 0 $ and $\lambda_2 \in [0,1)$ such that
    \begin{equation*}
        \Vert U{\bf x} -T{\bf x} \Vert \leq \lambda_1 \Vert U{\bf x} \Vert + \lambda_2 \Vert T{\bf x} \Vert + \delta \Vert {\bf x} \Vert, \ \text{for any ${\bf x} \in \mathcal{X}_0$,}
    \end{equation*}
then $T$ is bounded on the dense subspace $\mathcal{X}_0$ and can be uniquely extended to a bounded linear operator $\Tilde{T}: \mathcal{X} \rightarrow \mathcal{Y}$, and the inequality still holds for $\Tilde{T}$ and ${\bf x} \in \mathcal{X}$.  
\end{corollary}

\begin{proof}
    For any ${\bf x} \in \mathcal{X}_0$, we have
     \begin{equation*}
      \Vert T{\bf x} \Vert \leq  \Vert U{\bf x} \Vert +  \Vert U{\bf x} -T{\bf x} \Vert \leq (1+\lambda_1) \Vert U{\bf x} \Vert + \lambda_2 \Vert T{\bf x} \Vert + \delta \Vert {\bf x} \Vert.
    \end{equation*}
Therefore, for any ${\bf x} \in \mathcal{X}_0$, 
    \begin{equation*}
      \Vert T{\bf x} \Vert  \leq \frac{1+\lambda_1}{1-\lambda_2} \Vert U{\bf x} \Vert + \frac{\delta}{1-\lambda_2}  \Vert {\bf x} \Vert,
    \end{equation*}
which means that $T$ is bounded on the dense subspace $\mathcal{X}_0$. By standard analysis, $T$ can be uniquely extended to a bounded linear operator $\Tilde{T}: \mathcal{X} \rightarrow \mathcal{Y}$ defined by
$$\Tilde{T}{\bf x} = \lim\limits_{k \rightarrow \infty} T{\bf x}_k,$$
where ${\bf x} \in \mathcal{X}$, $\{{\bf x}_k\} \subset \mathcal{X}_0$, and ${\bf x}_k \rightarrow {\bf x}$. The definition of $\Tilde{T}{\bf x}$ is also independent of the approximating sequence: if ${\bf x}_k\to{\bf x}$ and ${\bf y}_k\to{\bf x}$, then $\|T{\bf x}_k-T{\bf y}_k\| \leq \|T\|\ \|{\bf x}_k-{\bf y}_k\|\to 0$, where $\|T\|$ is the norm of $T$ on $\mathcal{X}_0$.
Then for any ${\bf x} \in \mathcal{X}$, 
\begin{equation*}
\begin{split}
     \Vert U{\bf x} -\Tilde{T}{\bf x} \Vert= \lim_{k \rightarrow \infty} \| U{\bf x}_k -T{\bf x}_k \|
      &\leq  \lim_{k \rightarrow \infty} (\lambda_1 \Vert U{\bf x}_k \Vert + \lambda_2 \Vert T{\bf x}_k \Vert + \delta \Vert {\bf x}_k \Vert) \\
      & = \lambda_1 \Vert U{\bf x} \Vert + \lambda_2 \Vert \Tilde{T}{\bf x} \Vert + \delta \Vert {\bf x} \Vert.
\end{split}
    \end{equation*}
\end{proof}

\section{Probabilistic Frame Perturbations}\label{FramePerturbation}
In this section, we first list the associated results for frame perturbations, and then we obtain the counterparts for probabilistic frames. The whole section is divided into two parts. The first part concerns the extension of the Paley-Wiener perturbation theorem to probabilistic frames. The second part gives sufficient perturbation conditions where probabilistic dual frames are used.  

Peter G. Casazza and Ole Christensen generalized the Paley-Wiener perturbation theorem to study the stability of frames in Hilbert spaces as follows. Note that although the sequences $\{{\bf f}_i\}_{i=1}^{\infty}$ and $\{{\bf g}_i\}_{i=1}^{\infty}$
are infinite, the perturbation estimates only require finite weighted sums with arbitrary scalars $\{c_i\}_{i=1}^n$ where $n \in \mathbb{N}$.

\begin{theorem}[Theorem 2 in \cite{cazassa1997perturbation}]\label{PaleyFrame2}
Let $\{{\bf f}_i\}_{i=1}^{\infty}$ be a frame for the Hilbert space $\mathcal{H}$ with bounds $0<A \leq B$ and let $\{{\bf g}_i\}_{i=1}^{\infty}$ be a sequence in $\mathcal{H}$. Suppose there exist constants $\lambda_1,\lambda_2, \delta \geq 0$ such that $\max (\lambda_1 + \frac{\delta}{\sqrt{A}}, \lambda_2) < 1$ and for all scalars $c_1, \dots, c_n \ (n=1, 2, \dots)$,
    \begin{equation*}
  \Big   \Vert \sum\limits_{i=1}^n c_i ({\bf f}_i-{\bf g}_i) \Big \Vert \leq \lambda_1  \Big \Vert \sum\limits_{i=1}^n c_i {\bf f}_i \Big \Vert + \lambda_2  \Big \Vert \sum\limits_{i=1}^n c_i {\bf g}_i \Big \Vert + \delta  \Big [ \sum\limits_{i=1}^n |c_i|^2 \Big ]^{1/2},
    \end{equation*}
then $\{{\bf g}_i\}_{i=1}^{\infty}$ is a frame for $\mathcal{H}$ with bounds
\begin{equation*}
   A \Big (1-\frac{\lambda_1+\lambda_2 + \frac{\delta}{\sqrt{A}}}{1+\lambda_2} \Big )^2 \ \text{and} \ B \Big (1+\frac{\lambda_1+\lambda_2 + \frac{\delta}{\sqrt{B}}}{1-\lambda_2} \Big )^2.
\end{equation*}
\end{theorem}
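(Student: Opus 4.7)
The plan is to reformulate the scalar hypothesis as an operator inequality between the two synthesis maps, extract the upper frame bound of $\{{\bf g}_i\}$ from \cref{extension}, and then produce the lower bound by inverting a carefully chosen operator on $\mathcal{H}$ via \cref{InverseType2}.

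First I would introduce the synthesis operators $T_f\{c_i\}=\sum_i c_i {\bf f}_i$ and $T_g\{c_i\}=\sum_i c_i {\bf g}_i$, where $T_f:\ell^2\to\mathcal{H}$ is bounded with $\Vert T_f\Vert\leq\sqrt{B}$ and $T_g$ is initially defined only on the dense subspace $\mathcal{X}_0\subset\ell^2$ of finitely supported sequences. The displayed hypothesis becomes $\Vert T_f c-T_g c\Vert\leq \lambda_1\Vert T_f c\Vert+\lambda_2\Vert T_g c\Vert+\delta\Vert c\Vert_{\ell^2}$ on $\mathcal{X}_0$, so \cref{extension} extends $T_g$ to all of $\ell^2$ with the inequality intact. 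Solving this inequality for $\Vert T_g c\Vert$ yields $\Vert T_g\Vert\leq\frac{(1+\lambda_1)\sqrt{B}+\delta}{1-\lambda_2}$; taking adjoints then gives $\sum|\langle{\bf f},{\bf g}_i\rangle|^2=\Vert T_g^*{\bf f}\Vert^2\leq \Vert T_g\Vert^2\Vert{\bf f}\Vert^2$, which is the asserted upper frame bound after algebraic simplification.

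For the lower bound I would define the auxiliary operator $V:\mathcal{H}\to\mathcal{H}$ by
$$V{\bf f}=\sum_{i=1}^\infty\langle {\bf S}_f^{-1}{\bf f},{\bf f}_i\rangle\, {\bf g}_i=T_gT_f^*{\bf S}_f^{-1}{\bf f},$$
where ${\bf S}_f$ is the frame operator of $\{{\bf f}_i\}$. Substituting the coefficients $c_i=\langle{\bf S}_f^{-1}{\bf f},{\bf f}_i\rangle$ into the extended hypothesis, and using the identities $\sum c_i{\bf f}_i={\bf f}$, $\sum c_i{\bf g}_i=V{\bf f}$, and $\sum|c_i|^2=\langle{\bf f},{\bf S}_f^{-1}{\bf f}\rangle\leq \Vert{\bf f}\Vert^2/A$, I obtain
$$\Vert{\bf f}-V{\bf f}\Vert\leq\Big(\lambda_1+\frac{\delta}{\sqrt{A}}\Big)\Vert{\bf f}\Vert+\lambda_2\Vert V{\bf f}\Vert.$$
Since $\max(\lambda_1+\delta/\sqrt{A},\lambda_2)<1$, \cref{InverseType2} gives that $V$ is bounded invertible with $\Vert V^{-1}\Vert\leq(1+\lambda_2)/(1-\lambda_1-\delta/\sqrt{A})$.

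Finally, every ${\bf f}\in\mathcal{H}$ admits the representation ${\bf f}=V(V^{-1}{\bf f})=\sum d_i{\bf g}_i$ with $d_i=\langle{\bf S}_f^{-1}V^{-1}{\bf f},{\bf f}_i\rangle$. Repeating the coefficient-norm computation yields $\sum|d_i|^2\leq \Vert V^{-1}\Vert^2\Vert{\bf f}\Vert^2/A$, and Cauchy--Schwarz applied to the identity $\Vert{\bf f}\Vert^2=\sum d_i\overline{\langle{\bf f},{\bf g}_i\rangle}$ then produces $\sum|\langle{\bf f},{\bf g}_i\rangle|^2\geq A\Vert{\bf f}\Vert^2/\Vert V^{-1}\Vert^2$, which after inserting the bound on $\Vert V^{-1}\Vert$ is exactly the stated lower frame bound. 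The main subtlety is extracting this sharp constant $A$ at the end: a direct route through the adjoint $V^*={\bf S}_f^{-1}T_fT_g^*$ would bound $\Vert T_g^*{\bf f}\Vert$ by $(A/\sqrt{B})\Vert V^*{\bf f}\Vert$ and thereby lose a spurious factor of $A/B$, whereas the explicit coefficient representation of ${\bf f}$ combined with Cauchy--Schwarz is precisely what recovers the optimal dependence on $A$ alone.
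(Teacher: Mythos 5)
Your argument is correct and reproduces the stated bounds exactly; note that the paper only quotes this theorem from Casazza--Christensen without proof, so the natural comparison is with the paper's proof of its probabilistic analogue, Theorem~\ref{PaleyPFrame2}. Your route is essentially that same one --- synthesis operators extended via Corollary~\ref{extension}, the operator $V = T_g T_f^* {\bf S}_f^{-1}$ (the paper's $TU^{+}$) inverted via Lemma~\ref{InverseType2}, and a Cauchy--Schwarz reconstruction step --- with your final coefficient estimate $\sum_i |d_i|^2 \le \Vert V^{-1}\Vert^2 \Vert {\bf f}\Vert^2 / A$ being exactly the sharper bookkeeping that recovers the classical constants, where the probabilistic version in the paper settles for the cruder Bessel bound $M_2(\nu)$ at the corresponding step.
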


Furthermore, one can use dual frames in the frame perturbation.
\begin{theorem}[Theorem 2.1 in \cite{chen2014perturbations}]\label{PaleyDualFrames}
    Let $\{{\bf f}_i\}_{i=1}^{\infty}$ be a frame for the Hilbert space $\mathcal{H}$ with bounds $0<A \leq B$ and let $\{{\bf h}_i\}_{i=1}^{\infty}$ be a dual frame of $\{{\bf f}_i\}_{i=1}^{\infty}$ with upper frame bound $D>0$. Suppose $\{{\bf g}_i\}_{i=1}^{\infty}$ is a sequence in $\mathcal{H}$ such that 
    \begin{equation*}
       \alpha:= \sum_{i=1}^\infty \Vert {\bf f}_i-{\bf g}_i \Vert^2 < +\infty  \ \text{and} \ \beta:= \sum_{i=1}^\infty \Vert {\bf f}_i-{\bf g}_i \Vert \Vert {\bf h}_i \Vert < 1,
    \end{equation*}
then $\{{\bf g}_i\}_{i=1}^{\infty}$ is a frame in $\mathcal{H}$ with bounds $\frac{(1-\beta)^2}{D}$ and $B(1+\sqrt{\frac{\alpha}{B}})^2$.
\end{theorem}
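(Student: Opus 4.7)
The plan is to prove the two frame bounds separately: the upper (Bessel) bound by a direct $\ell^2$ Minkowski-type argument, and the lower bound via a Neumann-style inversion of an operator built using the dual frame $\{{\bf h}_i\}$.

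For the upper bound, I fix ${\bf f} \in \mathcal{H}$ and split $\langle {\bf f}, {\bf g}_i\rangle = \langle {\bf f}, {\bf f}_i\rangle + \langle {\bf f}, {\bf g}_i - {\bf f}_i\rangle$. The triangle inequality in $\ell^2$ gives
\[
\Big(\sum_i |\langle {\bf f}, {\bf g}_i\rangle|^2\Big)^{1/2} \leq \Big(\sum_i |\langle {\bf f}, {\bf f}_i\rangle|^2\Big)^{1/2} + \Big(\sum_i |\langle {\bf f}, {\bf g}_i - {\bf f}_i\rangle|^2\Big)^{1/2}.
\]
The first term is at most $\sqrt{B}\,\Vert {\bf f}\Vert$ by hypothesis; for the second, pointwise Cauchy--Schwarz gives $|\langle {\bf f}, {\bf f}_i - {\bf g}_i\rangle|^2 \leq \Vert {\bf f}\Vert^2 \Vert {\bf f}_i - {\bf g}_i\Vert^2$, so summation contributes $\sqrt{\alpha}\,\Vert {\bf f}\Vert$. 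Squaring yields the Bessel bound $(\sqrt{B} + \sqrt{\alpha})^2 = B(1+\sqrt{\alpha/B})^2$ for $\{{\bf g}_i\}$ (the displayed upper bound in the statement is missing a square).

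For the lower bound, I would introduce the operator $U:\mathcal{H}\to\mathcal{H}$ defined by $U{\bf f} := \sum_i \langle {\bf f}, {\bf h}_i\rangle {\bf g}_i$, which is well-defined and bounded as the composition of the analysis operator of $\{{\bf h}_i\}$ (Bessel bound $D$) with the synthesis operator of $\{{\bf g}_i\}$ (Bessel by the first step). Using the dual-frame reconstruction ${\bf f} = \sum_i \langle {\bf f}, {\bf h}_i\rangle {\bf f}_i$ together with the pointwise bound $|\langle {\bf f}, {\bf h}_i\rangle| \leq \Vert {\bf f}\Vert\Vert {\bf h}_i\Vert$, I compute
\[
\Vert {\bf f} - U{\bf f}\Vert = \Big\Vert \sum_i \langle {\bf f}, {\bf h}_i\rangle ({\bf f}_i - {\bf g}_i)\Big\Vert \leq \Vert {\bf f}\Vert \sum_i \Vert {\bf h}_i\Vert \Vert {\bf f}_i - {\bf g}_i\Vert = \beta \Vert {\bf f}\Vert.
\]
Since $\beta<1$, a standard Neumann series argument (the $\lambda_1=\beta$, $\lambda_2=0$ case of \Cref{InverseType2}) shows that $U$ is invertible on $\mathcal{H}$ with $\Vert U^{-1}\Vert \leq (1-\beta)^{-1}$.

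To conclude, for any ${\bf f} \in \mathcal{H}$ I write ${\bf f} = U(U^{-1}{\bf f}) = \sum_i \langle U^{-1}{\bf f}, {\bf h}_i\rangle {\bf g}_i$, pair this expansion with ${\bf f}$ in $\mathcal{H}$, and apply Cauchy--Schwarz in $\ell^2$ together with the Bessel bound $D$ for $\{{\bf h}_i\}$:
\[
\Vert {\bf f}\Vert^2 \leq \Big(\sum_i |\langle U^{-1}{\bf f}, {\bf h}_i\rangle|^2\Big)^{1/2}\Big(\sum_i |\langle {\bf f}, {\bf g}_i\rangle|^2\Big)^{1/2} \leq \frac{\sqrt{D}}{1-\beta}\Vert {\bf f}\Vert \Big(\sum_i |\langle {\bf f}, {\bf g}_i\rangle|^2\Big)^{1/2},
\]
and rearranging produces $\frac{(1-\beta)^2}{D}\Vert {\bf f}\Vert^2 \leq \sum_i |\langle {\bf f}, {\bf g}_i\rangle|^2$. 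The one nontrivial observation is recognizing that the coefficients $\langle U^{-1}{\bf f}, {\bf h}_i\rangle$ are exactly the right ``dual coordinates'' to pair against $\{{\bf g}_i\}$; once $U$ is identified, everything else is a routine application of Cauchy--Schwarz and Neumann inversion, so the bookkeeping around $U$ is the main (and only real) obstacle.
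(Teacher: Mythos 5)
Your proof is correct, and it follows essentially the same route the paper itself takes for the probabilistic analogue (Theorem~\ref{StabilityPDualFrame}): dual-frame reconstruction, the estimate $\Vert {\bf f}-U{\bf f}\Vert\le\beta\Vert {\bf f}\Vert$, Neumann inversion, and a final Cauchy--Schwarz against the Bessel bound of $\{{\bf h}_i\}$ (the paper only cites the discrete statement without proving it). Your side remark is also right: the upper bound must be $B(1+\sqrt{\alpha/B})^2=(\sqrt{B}+\sqrt{\alpha})^2$ --- without the square the stated bound already fails for $\mathcal{H}=\mathbb{R}$, ${\bf f}_1={\bf h}_1=1$, ${\bf g}_1=1+t$ with $0<t<1$.
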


\subsection{Paley-Wiener Theorem for Probabilistic Frames}\label{PaleyWiener}
In this subsection, we establish the Paley-Wiener perturbation theorem for probabilistic frames, where Lemma \ref{InverseType2} and Corollary \ref{extension} on the invertibility and extension of linear operators are utilized. We denote by $C_c(\mathbb{R}^n)$ the set of continuous functions on $\mathbb{R}^n$ with compact support, which serves as the test function space. Note that $C_c(\mathbb{R}^n)$ is dense in $L^2(\mu)$ when $\mu$ is a Borel probability measure on $\mathbb{R}^n$.

\begin{theorem}\label{PaleyPFrame2}
Let $\mu$ be a probabilistic frame for $\mathbb{R}^n$ with bounds $0<A\leq B$ and $\nu \in \mathcal{P}_2(\mathbb{R}^n)$. If there exist  $\lambda_1,\lambda_2, \delta \geq 0$ such that $\mathrm{max} (\lambda_1 + \frac{\delta}{\sqrt{A}}, \lambda_2) <1$ and for all $w \in C_c(\mathbb{R}^n)$,
    \begin{equation*}
    \begin{split}
        \Big   \Vert \int_{\mathbb{R}^n} w({\bf x}){\bf x} d\mu({\bf x}) & - \int_{\mathbb{R}^n} w({\bf y}){\bf y} d\nu({\bf y}) \Big \Vert  \\
       &\leq \lambda_1 \Big \Vert \int_{\mathbb{R}^n} w({\bf x}){\bf x} d\mu({\bf x}) \Big \Vert + \lambda_2 \Big \Vert \int_{\mathbb{R}^n} w({\bf y}){\bf y} d\nu({\bf y}) \Big \Vert + \delta  \Vert  w \Vert_{L^2(\mu)}, 
    \end{split}
    \end{equation*}
then $\nu$ is a probabilistic frame for $\mathbb{R}^n$ with bounds 
$\frac{ A^2(1-(\lambda_1 + \frac{\delta}{\sqrt{A}}))^2}{(1+\lambda_2)^2 M_2(\nu)} \ \text{and} \ M_2(\nu).$
\end{theorem}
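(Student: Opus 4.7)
The plan is to reduce the hypothesis on compactly supported test functions to a direct inequality on the frame operators $\mathbf{S}_\mu$ and $\mathbf{S}_\nu$, and then apply Lemma \ref{InverseType2} to the finite-dimensional operator $U := \mathbf{S}_\nu \mathbf{S}_\mu^{-1}$ on $\mathbb{R}^n$. This mirrors the Hilbert-space proof of Theorem \ref{PaleyFrame2}, but with one extra approximation step because the hypothesis is phrased only for $w \in C_c(\mathbb{R}^n)$.

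The upper frame bound $M_2(\nu)$ is immediate from Cauchy--Schwarz inside the integral, so the work lies in the lower bound. The natural test function to extract the frame operators is $w_{\mathbf{f}}(\mathbf{y}) := \langle \mathbf{f}, \mathbf{y}\rangle$, for then $\int w_{\mathbf{f}}(\mathbf{x}) \mathbf{x}\, d\mu(\mathbf{x}) = \mathbf{S}_\mu \mathbf{f}$, $\int w_{\mathbf{f}}(\mathbf{y}) \mathbf{y}\, d\nu(\mathbf{y}) = \mathbf{S}_\nu \mathbf{f}$, and $\|w_{\mathbf{f}}\|_{L^2(\mu)}^2 = \langle \mathbf{S}_\mu \mathbf{f}, \mathbf{f}\rangle$. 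Since $w_{\mathbf{f}}$ is not compactly supported, I would approximate it by truncations $w_R := \chi_R w_{\mathbf{f}} \in C_c(\mathbb{R}^n)$, where $\chi_R \in C_c(\mathbb{R}^n)$ satisfies $0 \leq \chi_R \leq 1$ and $\chi_R \to 1$ pointwise. Because $\mu, \nu \in \mathcal{P}_2(\mathbb{R}^n)$, the integrands $w_R(\mathbf{x})\mathbf{x}$ and $w_R(\mathbf{y})^2$ are dominated by $\|\mathbf{f}\|\,\|\mathbf{x}\|^2$ and $\|\mathbf{f}\|^2 \|\mathbf{y}\|^2$ respectively, which are integrable against both measures. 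Dominated convergence then lets me pass to the limit in the hypothesis and obtain
\begin{equation*}
\|\mathbf{S}_\mu \mathbf{f} - \mathbf{S}_\nu \mathbf{f}\| \leq \lambda_1 \|\mathbf{S}_\mu \mathbf{f}\| + \lambda_2 \|\mathbf{S}_\nu \mathbf{f}\| + \delta \sqrt{\langle \mathbf{S}_\mu \mathbf{f}, \mathbf{f}\rangle}.
\end{equation*}

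Because $\mathbf{S}_\mu \geq A\,\mathrm{Id}$, the spectral inequality $\sqrt{\langle \mathbf{S}_\mu \mathbf{f}, \mathbf{f}\rangle} \leq \|\mathbf{S}_\mu \mathbf{f}\|/\sqrt{A}$ absorbs the $\delta$-term into $\|\mathbf{S}_\mu \mathbf{f}\|$, giving $\|\mathbf{S}_\mu \mathbf{f} - \mathbf{S}_\nu \mathbf{f}\| \leq (\lambda_1 + \delta/\sqrt{A}) \|\mathbf{S}_\mu \mathbf{f}\| + \lambda_2 \|\mathbf{S}_\nu \mathbf{f}\|$. Substituting $\mathbf{g} := \mathbf{S}_\mu \mathbf{f}$ and $U := \mathbf{S}_\nu \mathbf{S}_\mu^{-1}$ turns this into $\|\mathbf{g} - U\mathbf{g}\| \leq (\lambda_1+\delta/\sqrt{A})\|\mathbf{g}\| + \lambda_2 \|U\mathbf{g}\|$, which is exactly the hypothesis of Lemma \ref{InverseType2} (both constants lie in $[0,1)$ by assumption). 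The lemma yields $\|U\mathbf{g}\| \geq \frac{1-\lambda_1-\delta/\sqrt{A}}{1+\lambda_2}\|\mathbf{g}\|$; combined with $\|\mathbf{S}_\mu \mathbf{f}\| \geq A\|\mathbf{f}\|$, this produces
\begin{equation*}
\|\mathbf{S}_\nu \mathbf{f}\| \geq \frac{A\,(1-\lambda_1-\delta/\sqrt{A})}{1+\lambda_2}\|\mathbf{f}\|,
\end{equation*}
so $\mathbf{S}_\nu$ has trivial kernel and is therefore positive definite, and Proposition \ref{TAcharacterization} guarantees $\nu$ is a probabilistic frame.

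To convert the operator-norm bound on $\|\mathbf{S}_\nu \mathbf{f}\|$ into the quadratic-form bound in the statement, I use $\|\mathbf{S}_\nu \mathbf{f}\|^2 = \langle \mathbf{S}_\nu^2 \mathbf{f}, \mathbf{f}\rangle \leq \|\mathbf{S}_\nu\|\,\langle \mathbf{S}_\nu \mathbf{f}, \mathbf{f}\rangle$ together with $\|\mathbf{S}_\nu\| \leq M_2(\nu)$, producing the advertised lower frame bound $A^2(1-\lambda_1-\delta/\sqrt{A})^2 / \bigl((1+\lambda_2)^2 M_2(\nu)\bigr)$. The only delicate step is the truncation argument used to extend the hypothesis from $C_c(\mathbb{R}^n)$ to the linear test functions $w_{\mathbf{f}}$; everything after that is a direct appeal to Lemma \ref{InverseType2} together with elementary positive-operator inequalities.
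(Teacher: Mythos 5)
Your proof is correct, and it reaches the stated bounds by a noticeably more elementary route than the paper. The paper works with the synthesis operators $U,T:L^2(\mu)\to\mathbb{R}^n$, invokes Corollary \ref{extension} to extend the hypothesis from $C_c(\mathbb{R}^n)$ to all of $L^2(\mu)$, and then tests with the pseudo-inverse $U^{+}{\bf x}=\langle {\bf S}_\mu^{-1}{\bf x},\cdot\rangle$ before applying Lemma \ref{InverseType2} to $TU^{+}$ and finishing with Cauchy--Schwarz in $L^2(\nu)$. You observe that the only test functions actually needed are the linear ones $w_{\bf f}=\langle{\bf f},\cdot\rangle$, which you reach directly by truncation and dominated convergence (legitimate, since $\mu,\nu\in\mathcal{P}_2(\mathbb{R}^n)$ dominate $|w_R({\bf x})|\,\|{\bf x}\|\le\|{\bf f}\|\,\|{\bf x}\|^2$), thereby bypassing the operator-extension corollary entirely. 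Note that your $U={\bf S}_\nu{\bf S}_\mu^{-1}$ coincides with the paper's $TU^{+}$, and your test function $w_{{\bf S}_\mu^{-1}{\bf x}}$ is exactly $U^{+}{\bf x}$, so the two arguments apply Lemma \ref{InverseType2} to the same finite-dimensional operator; the genuine divergence is at the two ends. Your front end trades the $L^2(\mu)$ functional analysis for a concrete approximation argument, and your back end replaces the reconstruction-plus-Cauchy--Schwarz step with the spectral inequalities $\langle{\bf S}_\mu{\bf f},{\bf f}\rangle\le\|{\bf S}_\mu{\bf f}\|^2/A$ and $\|{\bf S}_\nu{\bf f}\|^2\le\|{\bf S}_\nu\|_2\,\langle{\bf S}_\nu{\bf f},{\bf f}\rangle\le M_2(\nu)\,\langle{\bf S}_\nu{\bf f},{\bf f}\rangle$ for symmetric positive semidefinite matrices; both steps check out and yield the identical constant. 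What the paper's heavier machinery buys is a template that survives when the relevant operators are genuinely infinite-dimensional (as in the Hilbert-space Theorem \ref{PaleyFrame2}); what your version buys is transparency: everything reduces to $n\times n$ matrix algebra, and it makes explicit that the hypothesis is only ever used on (approximations of) linear test functions.
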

\begin{proof}
Since $\nu \in \mathcal{P}_2(\mathbb{R}^n)$, $\nu$ is Bessel  with bound $M_2(\nu)$. Now let us get the lower frame bound. Define the synthesis operator $U:L^2(\mu) \rightarrow \mathbb{R}^n$ for $\mu$ as 
\begin{equation*}
    U(w) := \int_{\mathbb{R}^n} w({\bf x}) {\bf x} d\mu({\bf x}).
    \end{equation*}
Clearly, $U$ is bounded and $\Vert U \Vert^2 \leq M_2(\mu)$. Then its adjoint (analysis) operator $U^*: \mathbb{R}^n \rightarrow L^2(\mu)$ exists and is given by $U^*({\bf x}) = \langle {\bf x}, \cdot \rangle$. Note that $UU^* = {\bf S}_{\mu}$ where ${\bf S}_{\mu}$ is the frame operator of $\mu$. By Proposition \ref{TAcharacterization}, ${\bf S}_{\mu}$ is positive definite. 
So $UU^*$ is invertible and $(UU^*)^{-1} = {\bf S}_{\mu}^{-1}$. Define  a linear operator $T: C_c(\mathbb{R}^n) \rightarrow \mathbb{R}^n$ as
\begin{equation*}
    T(w) := \int_{\mathbb{R}^n} w({\bf y}) {\bf y} d\nu({\bf y}).
\end{equation*}
By the theorem assumption, there exist  $\lambda_1,\lambda_2, \delta \geq 0$ such that $\mathrm{max} (\lambda_1 + \frac{\delta}{\sqrt{A}}, \lambda_2) <1$ and for all $w \in C_c(\mathbb{R}^n)$,
\begin{equation*}
    \Vert U(w) - T(w) \Vert \leq \lambda_1 \Vert U(w)  \Vert + \lambda_2  \Vert T(w) \Vert + \delta  \Vert  w \Vert_{L^2(\mu)} .
    \end{equation*}
Since $C_c(\mathbb{R}^n)$ is dense in $L^2(\mu)$, by Corollary \ref{extension}, $T$ is bounded on $C_c(\mathbb{R}^n)$ and can be uniquely extended to a bounded linear operator $\Tilde{T}: L^2(\mu) \rightarrow \mathbb{R}^n$ given by 
$$
 \Tilde{T}(w) := \lim_{k \rightarrow \infty} T(w_k) = \lim_{k \rightarrow \infty} \int_{\mathbb{R}^n} w_k({\bf y}) {\bf y} d\nu({\bf y}),
$$
where $w \in L^2(\mu)$, $\{w_k\} \subset C_c(\mathbb{R}^n)$, and $w_k$ converges to $w$ in $L^2(\mu)$. Furthermore, for any $w \in L^2(\mu)$, we still have
\begin{equation}\label{PaleyWienerType2}
    \Vert U(w) - \Tilde{T}(w) \Vert \leq \lambda_1 \Vert U(w)  \Vert + \lambda_2  \Vert \Tilde{T}(w) \Vert + \delta  \Vert  w \Vert_{L^2(\mu)} .
    \end{equation}
Therefore, for any $w \in L^2(\mu)$,
\begin{equation*}
       \Vert \Tilde{T}(w)  \Vert \leq   \Vert U(w)  \Vert +    \Vert U(w) - \Tilde{T}(w)  \Vert \leq \big((\lambda_1 +1)  \Vert U  \Vert +\delta  \big) \Vert  w \Vert_{L^2(\mu)} + \lambda_2\Vert \Tilde{T}(w)  \Vert.
\end{equation*}
Thus 
$$\Vert \Tilde{T}  \Vert \leq \frac{ (\lambda_1 +1)  \Vert U  \Vert +\delta}{1-\lambda_2} < +\infty.$$
Now define $U^{+}: \mathbb{R}^n \rightarrow L^2(\mu)$ by 
\begin{equation*}
    (U^{+}{\bf x})(\cdot) := (U^*(UU^*)^{-1}{\bf x}) (\cdot) = (U^*({\bf S}_{\mu}^{-1}{\bf x})) (\cdot) = \langle {\bf S}_{\mu}^{-1}{\bf x}, \cdot \rangle   \in L^2(\mu),
\end{equation*}
where $U^*$ is the adjoint operator of $U$. Then 
\begin{equation*}
\begin{split}
        \Vert U^{+}{\bf x} \Vert_{L^2(\mu)}^2 &= \int_{\mathbb{R}^n} \left|\langle {\bf S}_{\mu}^{-1}{\bf x}, {\bf y} \rangle  \right|^2 d\mu({\bf y}) 
        =  \int_{\mathbb{R}^n} \left|\langle {\bf x},{\bf y} \rangle \right|^2 d  ({{\bf S}_{\mu}^{-1}}_{\#}\mu)({\bf y}) \leq \frac{1}{A} \Vert  {\bf x} \Vert^2,
\end{split}
\end{equation*}
where the inequality follows from that ${{\bf S}_{\mu}^{-1}}_{\#}\mu$ is a probabilistic frame with upper bound $\frac{1}{A}$. Then, for any ${\bf x} \in \mathbb{R}^n$,  replacing $w$ in Equation \eqref{PaleyWienerType2} with $U^{+}{\bf x}$ leads to 
\begin{equation*}
 \begin{split}
       \Vert {\bf x} - \Tilde{T}(U^{+}{\bf x}) \Vert   \leq (\lambda_1 + \frac{\delta}{\sqrt{A}}) \Vert  {\bf x} \Vert + \lambda_2 \Vert \Tilde{T}(U^{+}{\bf x})  \Vert.
 \end{split}
\end{equation*}
Since $ \max (\lambda_1 + \frac{\delta}{\sqrt{A}}, \lambda_2) <1$,  Lemma \ref{InverseType2} implies that $\Tilde{T}U^{+}: \mathbb{R}^n \rightarrow \mathbb{R}^n$ is invertible and $\Vert (\Tilde{T}U^{+})^{-1}  \Vert \leq \frac{1+\lambda_2}{1- (\lambda_1 + \frac{\delta}{\sqrt{A}})}.$
Therefore, each ${\bf x} \in \mathbb{R}^n$ could be written as 
\begin{equation}\label{eq:reconstruction}
    {\bf x} = \Tilde{T}U^{+}(\Tilde{T}U^{+})^{-1}{\bf x} = \Tilde{T}\left(\langle {\bf S}_{\mu}^{-1}(\Tilde{T}U^{+})^{-1}{\bf x}, \cdot \rangle \right).
\end{equation}
Note that for the map $w({\bf y})=\langle {\bf a},{\bf y}\rangle$ where ${\bf a} \in \mathbb R^n$ is given, there exist $\{w_k\} \in C_c(\mathbb R^n)$ such that $w_k$ not only converges to $w$ in $L^2(\mu)$, but also $w_k$ converges to $w$ pointwise. Indeed, define $\psi:[0,\infty) \to [0,1]$ by $\psi(t) = 
\begin{cases}
1, & 0  \leq t \leq 1 \\
2-t, & 1 < t < 2 \\
0, & t \geq 2
\end{cases}.$
Clearly, $\psi$ is continuous and has compact support. 
For each $k\geq 1$, define
\[
\phi_k({\bf y}):=\psi\left(\frac{\|{\bf y}\|}{k}\right) \ \text{and} \
w_k({\bf y}):=\phi_k({\bf y}) w({\bf y}).
\]
Then $w_k\in C_c(\mathbb R^n)$, $0 \leq \phi_k \leq 1$, and
$\phi_k({\bf y}) \to 1$ for each ${\bf y}$. So $w_k $ converges to $w$ pointwise.
Moreover, since $\mu \in \mathcal P_2(\mathbb R^n)$, then
\[
\|w_k-w\|_{L^2(\mu)}^2
=\int_{\mathbb R^n} |\langle {\bf a},{\bf y}\rangle|^2 |1-\phi_k({\bf y})|^2\ d\mu({\bf y})
\leq \|{\bf a}\|^2 \int_{\{\|{\bf y}\|>k\}} \|{\bf y}\|^2\,d\mu({\bf y})\to 0,
\]
which shows that $w_k$ converges to $w$ in $L^2(\mu)$. Thus, for the map $\langle {\bf S}_{\mu}^{-1}(\Tilde{T}U^{+})^{-1}{\bf x}, \cdot \rangle: \mathbb{R}^n \rightarrow \mathbb{R}$, there exist $\{w_k\} \in C_c(\mathbb R^n)$ given by $w_k({\bf y}) = \langle {\bf S}_{\mu}^{-1}(\Tilde{T}U^{+})^{-1}{\bf x}, {\bf y} \rangle \phi_k({\bf y})$,
such that $w_k$ not only converges to $\langle {\bf S}_{\mu}^{-1}(\Tilde{T}U^{+})^{-1}{\bf x}, \cdot \rangle$ in $L^2(\mu)$, but also for each ${\bf y} \in \mathbb{R}^n$, $ |w_k({\bf y})| \leq |\langle {\bf S}_{\mu}^{-1}(\Tilde{T}U^{+})^{-1}{\bf x}, {\bf y} \rangle| $ and $ w_k({\bf y}) \rightarrow \langle {\bf S}_{\mu}^{-1}(\Tilde{T}U^{+})^{-1}{\bf x}, {\bf y} \rangle$.
Therefore, by the definition of $\Tilde{T}$ and Equation \eqref{eq:reconstruction}, we have 
\begin{equation*}
    {\bf x} = \Tilde{T}\left(\langle {\bf S}_{\mu}^{-1}(\Tilde{T}U^{+})^{-1}{\bf x}, \cdot \rangle \right) = \lim_{k \rightarrow \infty} T(w_k) = \lim_{k \rightarrow \infty} \int_{\mathbb{R}^n} w_k({\bf y}) {\bf y} d\nu({\bf y}).
\end{equation*}
Then, for any ${\bf x} \in \mathbb{R}^n$, we have
\begin{equation*}
\begin{split}
 \Vert  {\bf x} \Vert^4 &= |\langle {\bf x}, {\bf x} \rangle|^2 = \lim_{k \rightarrow \infty} \left | \int_{\mathbb{R}^n} w_k({\bf y}) \langle {\bf x}, {\bf y} \rangle d\nu({\bf y}) \right|^2 \\
   & \leq \lim_{k \rightarrow \infty} \int_{\mathbb{R}^n} |w_k({\bf y})|^2 d\nu({\bf y})  \ \int_{\mathbb{R}^n} |\langle {\bf x}, {\bf y} \rangle|^2 d\nu({\bf y})\\
   & \leq \int_{\mathbb{R}^n} |\langle {\bf S}_{\mu}^{-1}(\Tilde{T}U^{+})^{-1}{\bf x}, {\bf y} \rangle|^2 d\nu({\bf y})  \ \int_{\mathbb{R}^n} |\langle {\bf x}, {\bf y} \rangle|^2 d\nu({\bf y})\\
  &\leq \Vert {\bf S}_{\mu}^{-1} \Vert_2^2 \ \Vert(\Tilde{T}U^{+})^{-1} \Vert^2 \ \Vert {\bf x} \Vert^2  \int_{\mathbb{R}^n} \Vert {\bf y} \Vert^2 d\nu({\bf y})  \ \int_{\mathbb{R}^n} |\langle {\bf x}, {\bf y} \rangle|^2 d\nu({\bf y})\\
   &\leq \frac{M_2(\nu)}{A^2} \Big (\frac{1+\lambda_2}{1- (\lambda_1 + \frac{\delta}{\sqrt{A}})} \Big)^2   \Vert {\bf x} \Vert^2  \ \int_{\mathbb{R}^n} |\langle {\bf x}, {\bf y} \rangle|^2 d\nu({\bf y}),
\end{split}
\end{equation*}
where the first inequality is due to the Cauchy–Schwarz inequality and $\Vert {\bf S}_{\mu}^{-1} \Vert_2$ is the 2-matrix norm of ${\bf S}_{\mu}^{-1}$ satisfying $ \Vert {\bf S}_{\mu}^{-1} \Vert_2 \leq \frac{1}{A}$. In particular, we can drop the limit in the second inequality because for any $k$ and any ${\bf y} \in \mathbb{R}^n$,  $|w_k({\bf y})| \leq |\langle {\bf S}_{\mu}^{-1}(\Tilde{T}U^{+})^{-1}{\bf x}, {\bf y} \rangle|$.
Thus, for any ${\bf x} \in \mathbb{R}^n$, 
\begin{equation*}
   \frac{ A^2(1-(\lambda_1 + \frac{\delta}{\sqrt{A}}))^2}{(1+\lambda_2)^2 M_2(\nu)} \Vert  {\bf x} \Vert^2  \leq \int_{\mathbb{R}^n} |\langle {\bf x}, {\bf y} \rangle|^2 d\nu({\bf y}) \leq M_2(\nu) \ \Vert  {\bf x} \Vert^2. \qedhere
\end{equation*}
\end{proof}

The following lemma is inspired by the special case $\lambda_1=\lambda_2=0$ and $\delta=\sqrt{R}$. However, its proof only relies on the definition of probabilistic frames.

\begin{lemma}[Sweetie's Lemma]\label{sweetie}
Let $\mu$ be a probabilistic frame for $\mathbb{R}^n$ with bounds $0<A\leq B$ and $\nu \in \mathcal{P}_2(\mathbb{R}^n)$. Suppose there exists a constant $R$ with $ 0<R < A$ such that for any ${\bf x} \in \mathbb{R}^n$, 
    \begin{equation*}
     \Big   \vert \int_{\mathbb{R}^n} |\langle {\bf x}, {\bf y} \rangle|^2 d\mu({\bf y}) - \int_{\mathbb{R}^n} |\langle {\bf x}, {\bf z} \rangle|^2  d\nu({\bf z}) \Big \vert \leq R    \Vert  {\bf x} \Vert^2. 
    \end{equation*}
Then $\nu$ is a probabilistic frame for $\mathbb{R}^n$ with bounds $A-R$ and $B+R$. 
\end{lemma}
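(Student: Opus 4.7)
The plan is to prove this directly from the definition of a probabilistic frame, with no operator-theoretic machinery required. The hypothesis essentially says that the quadratic form $\mathbf{x} \mapsto \int \langle \mathbf{x}, \mathbf{z}\rangle^2 \, d\nu(\mathbf{z})$ is a perturbation of $\mathbf{x} \mapsto \int \langle \mathbf{x}, \mathbf{y}\rangle^2 \, d\mu(\mathbf{y})$ by at most $R\|\mathbf{x}\|^2$, so we just need to propagate the frame bounds through this perturbation.

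First I would unpack the absolute value in the hypothesis to get the two-sided inequality
\begin{equation*}
-R\|\mathbf{x}\|^2 \leq \int_{\mathbb{R}^n} \langle \mathbf{x}, \mathbf{z}\rangle^2 \, d\nu(\mathbf{z}) - \int_{\mathbb{R}^n} \langle \mathbf{x}, \mathbf{y}\rangle^2 \, d\mu(\mathbf{y}) \leq R\|\mathbf{x}\|^2
\end{equation*}
for every $\mathbf{x} \in \mathbb{R}^n$. Then I would combine this with the frame bounds $A\|\mathbf{x}\|^2 \leq \int \langle \mathbf{x}, \mathbf{y}\rangle^2 \, d\mu(\mathbf{y}) \leq B\|\mathbf{x}\|^2$ for $\mu$. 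Adding the lower bound for $\mu$ to the left inequality above gives
\begin{equation*}
(A - R)\|\mathbf{x}\|^2 \leq \int_{\mathbb{R}^n} \langle \mathbf{x}, \mathbf{z}\rangle^2 \, d\nu(\mathbf{z}),
\end{equation*}
and adding the upper bound for $\mu$ to the right inequality gives
\begin{equation*}
\int_{\mathbb{R}^n} \langle \mathbf{x}, \mathbf{z}\rangle^2 \, d\nu(\mathbf{z}) \leq (B + R)\|\mathbf{x}\|^2.
\end{equation*}

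Finally, since $0 < R < A$ we have $0 < A - R \leq B + R < \infty$, so both constants are legitimate frame bounds, and $\nu$ satisfies the definition of a probabilistic frame with bounds $A - R$ and $B + R$. There is no genuine obstacle here; the lemma is essentially just the reverse triangle inequality applied to the quadratic form defined by the frame operator, which is presumably why the author names it after a sweetie — it is a sweet, one-line consequence of the definition that will be useful as a building block in the subsequent perturbation arguments.
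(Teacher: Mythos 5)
Your proposal is correct and is essentially identical to the paper's own proof: both unpack the absolute value into a two-sided inequality and add the frame bounds of $\mu$ to obtain the bounds $A-R$ and $B+R$ for $\nu$. No gaps.
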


\begin{proof}
    By the assumption of the lemma, we know that for any ${\bf x} \in \mathbb{R}^n$, 
\begin{equation*}
      \int_{\mathbb{R}^n} |\langle {\bf x}, {\bf y} \rangle|^2  d\mu({\bf y}) -R \Vert  {\bf x} \Vert^2  \leq \int_{\mathbb{R}^n} |\langle {\bf x}, {\bf z} \rangle|^2 d\nu({\bf z})  \leq  \int_{\mathbb{R}^n} |\langle {\bf x}, {\bf y} \rangle|^2  d\mu({\bf y})+ R    \Vert  {\bf x} \Vert^2.
    \end{equation*}
Since $\mu$ is a probabilistic frame for $\mathbb{R}^n$ with bounds $A$ and $B$, then for any ${\bf x} \in \mathbb{R}^n$,
\begin{equation*}
      (A-R) \Vert  {\bf x} \Vert^2   \leq \int_{\mathbb{R}^n} |\langle {\bf x}, {\bf z} \rangle|^2 d\nu({\bf z})  \leq  (B + R)    \Vert  {\bf x} \Vert^2 . 
    \end{equation*} 
    Therefore, $\nu$ is a probabilistic frame for $\mathbb{R}^n$ with bounds $A-R$ and $B+R$. 
\end{proof}

\subsection{Perturbations Including Probabilistic Dual Frames}\label{TransportDual} As in the classical frame setting in Theorem \ref{PaleyDualFrames}, frame perturbation estimates can depend on mixed terms of dual frames, which motivates their probabilistic analogue. In this work, we give sufficient perturbation conditions where probabilistic dual frames are used.
\begin{theorem}\label{StabilityPDualFrame}
 Let $\mu$ be a probabilistic frame for $\mathbb{R}^n$ and $\nu \in \mathcal{P}_2(\mathbb{R}^n)$ a probabilistic dual frame of $\mu$ with respect to $\gamma_{12} \in \Gamma(\mu, \nu)$. Suppose $\eta \in \mathcal{P}_2(\mathbb{R}^n)$ and $\gamma_{23} \in \Gamma(\nu, \eta)$. Then there exists $ \Tilde{\pi} \in \mathcal{P}(\mathbb{R}^n \times \mathbb{R}^n \times \mathbb{R}^n)$ with marginals $\gamma_{12}$ and $\gamma_{23}$, and if
 \begin{equation*}
     \sigma:= \int_{\mathbb{R}^n \times \mathbb{R}^n \times \mathbb{R}^n}  \Vert {\bf y} \Vert  \Vert {\bf x}-{\bf z} \Vert d\Tilde{\pi}({\bf x},{\bf y},{\bf z}) <1,
 \end{equation*}
then $\eta$ is a probabilistic frame for $\mathbb{R}^n$ with bounds $ \frac{(1- \sigma)^2}{M_2(\nu)} \ \text{and} \  M_2(\eta)$.
If the upper frame bound for $\nu$ is $D>0$, then the frame bounds for $\eta$ are $\frac{(1- \sigma)^2}{D} \ \text{and} \  M_2(\eta)$.

 \end{theorem}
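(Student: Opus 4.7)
The plan is to exploit the reconstruction formula arising from the duality pairing $\int {\bf x}{\bf y}^t d\gamma_{12}({\bf x},{\bf y}) = {\bf Id}$ between $\mu$ and its probabilistic dual $\nu$, and then to use $\gamma_{23}$ to transfer the role of ${\bf x}$ over to ${\bf z}$ with an error controlled by $\sigma$. The first step is to produce the three-factor coupling $\Tilde{\pi}$: since $\gamma_{12} \in \Gamma(\mu,\nu)$ and $\gamma_{23} \in \Gamma(\nu,\eta)$ share the common marginal $\nu$, the standard gluing lemma of optimal transport (disintegrate $\gamma_{12}({\bf x}\mid{\bf y})$ and $\gamma_{23}({\bf z}\mid{\bf y})$ against $\nu$ and multiply) furnishes $\Tilde{\pi} \in \mathcal{P}(\mathbb{R}^n \times \mathbb{R}^n \times \mathbb{R}^n)$ whose $({\bf x},{\bf y})$-marginal is $\gamma_{12}$ and whose $({\bf y},{\bf z})$-marginal is $\gamma_{23}$; in particular, its second marginal is $\nu$ and its third marginal is $\eta$.

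Next I would set up an operator-theoretic comparison on $\mathbb{R}^n$. Rewriting the duality relation under $\Tilde{\pi}$ gives
\begin{equation*}
{\bf f} = \int \left\langle {\bf f}, {\bf y} \right\rangle {\bf x} \, d\Tilde{\pi}({\bf x},{\bf y},{\bf z}), \qquad \forall \, {\bf f} \in \mathbb{R}^n.
\end{equation*}
Define $V: \mathbb{R}^n \to \mathbb{R}^n$ by $V({\bf f}) := \int \left\langle {\bf f}, {\bf y} \right\rangle {\bf z} \, d\Tilde{\pi}$. Then
\begin{equation*}
\Vert {\bf f} - V({\bf f}) \Vert = \Big\Vert \int \left\langle {\bf f}, {\bf y} \right\rangle ({\bf x} - {\bf z}) \, d\Tilde{\pi} \Big\Vert \leq \Vert {\bf f} \Vert \int \Vert {\bf y} \Vert \, \Vert {\bf x} - {\bf z} \Vert \, d\Tilde{\pi} = \sigma \Vert {\bf f} \Vert.
\end{equation*}
Since $\sigma < 1$, Lemma \ref{InverseType2} (with $\lambda_1 = \sigma$, $\lambda_2 = 0$) yields that $V$ is boundedly invertible with $\Vert V^{-1} \Vert \leq 1/(1-\sigma)$.

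For the lower frame bound on $\eta$, I would substitute ${\bf f} = V(V^{-1}{\bf f})$ into $\Vert {\bf f} \Vert^4 = \left\langle {\bf f}, {\bf f} \right\rangle^2$ and apply the Cauchy--Schwarz inequality in $L^2(\Tilde{\pi})$:
\begin{equation*}
\Vert {\bf f} \Vert^4 = \Big(\int \left\langle V^{-1}{\bf f}, {\bf y} \right\rangle \left\langle {\bf f}, {\bf z} \right\rangle d\Tilde{\pi}\Big)^2 \leq \int \left\langle V^{-1}{\bf f}, {\bf y} \right\rangle^2 d\nu({\bf y}) \cdot \int \left\langle {\bf f}, {\bf z} \right\rangle^2 d\eta({\bf z}),
\end{equation*}
where the two factors collapse to integrals against the marginals $\nu$ and $\eta$. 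Bounding the first factor by $M_2(\nu) \Vert V^{-1}{\bf f} \Vert^2 \leq M_2(\nu) \Vert {\bf f} \Vert^2/(1-\sigma)^2$ gives the lower bound $(1-\sigma)^2/M_2(\nu)$; replacing $M_2(\nu)$ by the sharper upper frame bound $D$ for $\nu$ when it is available produces the refined statement. The upper bound $M_2(\eta)$ is the automatic Bessel estimate for any element of $\mathcal{P}_2(\mathbb{R}^n)$.

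The main subtlety I anticipate is the $\sigma$-bound step: because $\Vert {\bf x} - {\bf z} \Vert \, \Vert {\bf y} \Vert$ genuinely depends on all three variables simultaneously, it is essential to perform this estimate under the full integral against $\Tilde{\pi}$ rather than attempting to separate the factors through the $\gamma_{12}$ or $\gamma_{23}$ marginals alone; this is precisely why the gluing construction of $\Tilde{\pi}$ (and not merely the individual couplings) is indispensable. Once the three-factor coupling is in hand and the combined Cauchy--Schwarz estimate is checked, the remainder follows the same invertibility-plus-Cauchy--Schwarz pattern already used in Theorem \ref{PaleyPFrame2}.
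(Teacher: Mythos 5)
Your proof is correct and takes essentially the same approach as the paper: glue $\gamma_{12}$ and $\gamma_{23}$ along their common marginal $\nu$, estimate $\Vert {\bf f} - V({\bf f})\Vert \leq \sigma \Vert {\bf f}\Vert$ to invert $V$ with $\Vert V^{-1}\Vert \leq 1/(1-\sigma)$, and finish with Cauchy--Schwarz against the $\nu$- and $\eta$-marginals. The only cosmetic difference is that the paper writes the operator (called $L$ there) as an integral against $\gamma_{23}$ rather than $\Tilde{\pi}$, which is the same map since the integrand depends only on $({\bf y},{\bf z})$.
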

 
\begin{proof}
By the Gluing Lemma \cite[pp.~59]{figalli2021invitation}, there exists $\Tilde{\pi} \in \mathcal{P}(\mathbb{R}^n \times \mathbb{R}^n \times \mathbb{R}^n)$ such that ${\pi_{xy}}_{\#}\Tilde{\pi} = \gamma_{12} $ and $  {\pi_{yz}}_{\#}\Tilde{\pi} = \gamma_{23}$, where $\pi_{xy}$ and $\pi_{yz}$ are projections onto $({\bf x},{\bf y})$ and $({\bf y},{\bf z})$ coordinates.
Since $\eta \in \mathcal{P}_2(\mathbb{R}^n)$, $\eta$ is Bessel with bound  $ M_2(\eta)$. Next let us find the lower frame bound. Define a linear operator $L: \mathbb{R}^n \rightarrow \mathbb{R}^n$ by 
     \begin{equation*}
         L({\bf f}) = \int_{\mathbb{R}^n \times \mathbb{R}^n} \langle {\bf f}, {\bf y} \rangle {\bf z} d \gamma_{23}({\bf y},{\bf z}), \ \text{for any} \ {\bf f} \in \mathbb{R}^n.
     \end{equation*}
Since $\nu$ is a probabilistic dual frame of $\mu$ with respect to $\gamma_{12} \in \Gamma(\mu, \nu)$, then 
     \begin{equation*}
    {\bf f} = \int_{\mathbb{R}^n \times \mathbb{R}^n} \langle {\bf f}, {\bf y} \rangle {\bf x}  d\gamma_{12}({\bf x}, {\bf y}) , \ \text{for any} \ {\bf f} \in \mathbb{R}^n.
\end{equation*}
Therefore, 
\begin{equation*}
\begin{split}
    \Vert {\bf f} -L({\bf f}) \Vert 
    &= \Big \Vert \int_{\mathbb{R}^n \times \mathbb{R}^n \times \mathbb{R}^n} \langle {\bf f} , {\bf y} \rangle ({\bf x}- {\bf z}) \ d \Tilde{\pi}({\bf x}, {\bf y},{\bf z}) \Big \Vert \leq  \sigma \Vert  {\bf f} \Vert  < \Vert {\bf f}   \Vert.
\end{split}
\end{equation*}
Thus $L: \mathbb{R}^n \rightarrow \mathbb{R}^n$ is invertible and $\Vert L^{-1} \Vert \leq \frac{1}{1-\sigma} $. Then for any ${\bf f} \in \mathbb{R}^n$, 
\begin{equation*}
    {\bf f} = LL^{-1}({\bf f}) = \int_{\mathbb{R}^n \times \mathbb{R}^n} \langle L^{-1}{\bf f}, {\bf y} \rangle {\bf z} d \gamma_{23}({\bf y},{\bf z}).
\end{equation*}
Therefore, 
\begin{equation*}
\begin{split}
   \Vert  {\bf f} \Vert^4 &=  |\langle {\bf f}, {\bf f} \rangle|^2 = \Big \vert \int_{\mathbb{R}^n \times \mathbb{R}^n} \langle L^{-1}{\bf f}, {\bf y} \rangle \langle {\bf f}, {\bf z} \rangle d \gamma_{23}({\bf y},{\bf z}) \Big \vert^2 \\
    & \leq \int_{\mathbb{R}^n} \left| \langle L^{-1}{\bf f}, {\bf y} \rangle \right|^2 d\nu({\bf y})  \ \int_{\mathbb{R}^n} \left|\langle {\bf f}, {\bf z} \rangle \right|^2 d\eta({\bf z})
   \leq  \frac{D}{(1- \sigma)^2} \Vert {\bf f} \Vert^2  \int_{\mathbb{R}^n} \left|\langle {\bf f}, {\bf z} \rangle \right|^2 d\eta({\bf z}),
\end{split}
\end{equation*}
where the first inequality is due to the Cauchy-Schwarz inequality in $L^2(\gamma_{23})$ and $\gamma_{23} \in \Gamma(\nu, \eta)$, and the second inequality follows from that $\nu$ is a probabilistic frame with upper bound $D>0$. Thus, for any ${\bf f} \in \mathbb{R}^n$, 
$$ \frac{(1- \sigma)^2}{D}    \Vert  {\bf f} \Vert^2  \leq  \ \int_{\mathbb{R}^n} \left|\langle {\bf f}, {\bf z} \rangle \right|^2 d\eta({\bf z}) \leq M_2(\eta) \Vert  {\bf f} \Vert^2.
$$
Hence, $\eta$ is a probabilistic frame for $\mathbb{R}^n$ with bounds $ \frac{(1- \sigma)^2}{D} $ and $M_2(\eta)$. Since $\nu \in \mathcal{P}_2(\mathbb{R}^n)$, $M_2(\nu)$ is always an upper frame bound for $\nu$.
 In this case, the frame bounds for $\eta$  are $ \frac{(1- \sigma)^2}{M_2(\nu)} $ and $M_2(\eta)$.
\end{proof}
    
We have the following corollary using the canonical probabilistic dual frame. 
\begin{corollary}\label{dualpeturbation}
     Let $\mu$ be a probabilistic frame for $\mathbb{R}^n$ with bounds $0<A \leq B$ and $\eta \in \mathcal{P}_2(\mathbb{R}^n)$. If 
 \begin{equation*}
       \hat{\sigma}:=  \int_{\mathbb{R}^n \times \mathbb{R}^n} \Vert  {\bf S}_\mu^{-1} {\bf x}\Vert 
     \Vert {\bf x}-{\bf z} \Vert  d \mu({\bf x}) d \eta({\bf z})<1,
 \end{equation*}
 then $\eta$ is a probabilistic frame for $\mathbb{R}^n$ with bounds $A(1- \hat{\sigma})^2 $ and $ M_2(\eta)$.
\end{corollary}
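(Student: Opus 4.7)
My plan is to derive Corollary \ref{dualpeturbation} as a direct instance of Theorem \ref{StabilityPDualFrame}, taking the canonical dual frame for $\nu$ and the product-type coupling for $\gamma_{23}$. Concretely, I would set $\nu := {{\bf S}_\mu^{-1}}_{\#}\mu$, which is a probabilistic dual of $\mu$ via the coupling $\gamma_{12} := ({\bf Id}\times {\bf S}_\mu^{-1})_{\#}\mu \in \Gamma(\mu,\nu)$, and I would recall from the preliminaries that $\nu$ has upper frame bound $D = 1/A$. For the coupling with the perturbation I would choose the independent coupling $\gamma_{23} := ({\bf S}_\mu^{-1}\times {\bf Id})_{\#}(\mu\otimes \eta) \in \Gamma(\nu,\eta)$, since this is precisely what makes the three-fold quantity $\sigma$ collapse to $\hat\sigma$.

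\textbf{Gluing and the key computation.} Next I would produce the joint measure $\Tilde\pi$ explicitly rather than invoking the gluing lemma abstractly: define $F:\mathbb{R}^n\times\mathbb{R}^n\to\mathbb{R}^{3n}$ by $F({\bf x},{\bf z}) := ({\bf x},\,{\bf S}_\mu^{-1}{\bf x},\,{\bf z})$ and set $\Tilde\pi := F_{\#}(\mu\otimes\eta)$. A short check using change of variables shows ${\pi_{xy}}_{\#}\Tilde\pi = \gamma_{12}$ and ${\pi_{yz}}_{\#}\Tilde\pi = \gamma_{23}$, so $\Tilde\pi$ is a legitimate gluing. Then
\begin{equation*}
\sigma = \int_{\mathbb{R}^n\times\mathbb{R}^n\times\mathbb{R}^n}\Vert {\bf x}-{\bf z}\Vert\,\Vert {\bf y}\Vert\,d\Tilde\pi({\bf x},{\bf y},{\bf z}) = \int_{\mathbb{R}^n\times\mathbb{R}^n}\Vert {\bf S}_\mu^{-1}{\bf x}\Vert\,\Vert {\bf x}-{\bf z}\Vert\,d\mu({\bf x})\,d\eta({\bf z}) = \hat\sigma<1.
\end{equation*}

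\textbf{Conclusion.} With $\sigma = \hat\sigma$ and the upper dual bound $D = 1/A$, Theorem \ref{StabilityPDualFrame} immediately yields that $\eta$ is a probabilistic frame with bounds $\tfrac{(1-\hat\sigma)^2}{D} = A(1-\hat\sigma)^2$ and $M_2(\eta)$, which is exactly the claim. There is no real obstacle: the only content of the argument is recognizing the right dual frame and the right coupling. The mildly delicate point, and the only place where I would slow down, is checking that the product coupling $({\bf S}_\mu^{-1}\times {\bf Id})_{\#}(\mu\otimes\eta)$ is compatible with $\gamma_{12}$ through the explicit gluing $F_{\#}(\mu\otimes\eta)$ and that the three-fold integral really reduces to $\hat\sigma$ after using ${\bf y} = {\bf S}_\mu^{-1}{\bf x}$ on the support of $\Tilde\pi$.
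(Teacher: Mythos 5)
Your proposal is correct and follows essentially the same route as the paper: take $\nu={{\bf S}_\mu^{-1}}_{\#}\mu$ with the coupling $\gamma_{12}=({\bf Id}\times{\bf S}_\mu^{-1})_{\#}\mu$, glue with the independent coupling $\gamma_{23}={{\bf S}_\mu^{-1}}_{\#}\mu\otimes\eta$ so that $\sigma$ collapses to $\hat\sigma$, and invoke Theorem \ref{StabilityPDualFrame} with $D=1/A$. Your explicit construction of the gluing as $F_{\#}(\mu\otimes\eta)$ is just a concrete rendering of the paper's $\Tilde\pi=\gamma_{12}\otimes\eta$ and is a welcome clarification, but not a different argument.
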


\begin{proof}
Let $ \nu = {{\bf S}_\mu^{-1}}_{\#}\mu$ be the canonical probabilistic dual frame of $\mu$ with respect to $\gamma_{12}:= {({\bf Id}, {\bf S}_\mu^{-1})}_{\#}\mu \in \Gamma(\mu, {{\bf S}_\mu^{-1}}_{\#}\mu)$, and let $\gamma_{23}:= {{\bf S}_\mu^{-1}}_{\#}\mu \otimes \eta \in \Gamma({{\bf S}_\mu^{-1}}_{\#}\mu, \eta)$. Then, by the Gluing Lemma{\cite[pp.~59]{figalli2021invitation}}, one may take $\Tilde{\pi}= \gamma_{12} \otimes \eta \in \mathcal{P}(\mathbb{R}^n \times \mathbb{R}^n \times \mathbb{R}^n)$ such that ${\pi_{xy}}_{\#}\Tilde{\pi} = \gamma_{12}$ and ${\pi_{yz}}_{\#}\Tilde{\pi} = \gamma_{23}$. Hence, 
 \begin{equation*}
     \hat{\sigma}=   \int_{\mathbb{R}^n \times \mathbb{R}^n}  \Vert  {\bf S}_\mu^{-1} {\bf x}\Vert \Vert {\bf x}-{\bf z} \Vert  d \mu({\bf x}) d \eta({\bf z}) = \int_{\mathbb{R}^n \times \mathbb{R}^n \times \mathbb{R}^n}  \Vert {\bf y} \Vert \Vert {\bf x}-{\bf z} \Vert  d \Tilde{\pi}({\bf x},{\bf y},{\bf z}).
 \end{equation*}
Since $\hat{\sigma}<1$ and the upper frame bound for ${{\bf S}_\mu^{-1}}_{\#}\mu$ is $ \frac{1}{A}$,  
then by Theorem \ref{StabilityPDualFrame}, $\eta$ is a probabilistic frame for $\mathbb{R}^n$ with bounds $     A(1- \hat{\sigma})^2$ and $ M_2(\eta)$. 
\end{proof}

\begin{remark}\label{remark1}
Note that $\Vert  {\bf S}_\mu^{-1} {\bf x}\Vert \leq \Vert  {\bf S}_\mu^{-1} \Vert_2 \Vert  {\bf x} \Vert \leq \frac{1}{A} \Vert  {\bf x} \Vert$. If 
 \begin{equation*}
     \hat{ \epsilon}:=   \int_{\mathbb{R}^n \times \mathbb{R}^n} \Vert {\bf x}\Vert   \Vert {\bf x}-{\bf z} \Vert d \mu({\bf x}) d \eta({\bf z})<A,
 \end{equation*}
 then $\hat{\sigma} \leq \frac{\hat{\epsilon}}{A}<1$. Thus, $\eta$ is a probabilistic frame with bounds $A(1- \hat{\sigma})^2 $ and $ M_2(\eta)$. 
\end{remark}

Indeed, Corollary \ref{dualpeturbation} and Remark \ref{remark1} can be generalized to any $\gamma \in \Gamma(\mu, \eta)$.

\begin{proposition}\label{CoulingDualPerturbation}
Let $\mu$ be a probabilistic frame for $\mathbb{R}^n$ with bounds $0<A \leq B$ and $\eta \in \mathcal{P}_2(\mathbb{R}^n)$. Suppose there exists $\gamma \in \Gamma(\mu, \eta)$ such that
\begin{equation*}
   \epsilon:= \int_{\mathbb{R}^n \times \mathbb{R}^n}  \Vert {\bf x}\Vert  \Vert {\bf x}-{\bf z} \Vert  d \gamma({\bf x},{\bf z})<A,
\end{equation*}
then $\eta$ is a probabilistic frame for $\mathbb{R}^n$ with bounds $\frac{(A- \epsilon)^2}{B} \ \text{and} \ M_2(\eta)$,
and if 
\begin{equation*}
   \chi := \int_{\mathbb{R}^n \times \mathbb{R}^n} \Vert {\bf S}_\mu^{-1} {\bf x}\Vert   \Vert {\bf x}-{\bf z} \Vert  d \gamma({\bf x},{\bf z})<1,
\end{equation*}
then $\eta$ is a probabilistic frame for $\mathbb{R}^n$ with bounds $\frac{A^2 (1-\chi)^2}{B} \ \text{and} \ M_2(\eta)$.
\end{proposition}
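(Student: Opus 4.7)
The plan is to adapt the operator--perturbation argument of Theorem \ref{StabilityPDualFrame} (as specialized in Corollary \ref{dualpeturbation}) to an arbitrary coupling $\gamma \in \Gamma(\mu, \eta)$, in place of the product coupling used there. The upper frame bound $M_2(\eta)$ is immediate from $\eta \in \mathcal{P}_2(\mathbb{R}^n)$, so the task in both assertions is to produce the lower frame bound.

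I would introduce the ``perturbed canonical dual reconstruction'' operator $L : \mathbb{R}^n \to \mathbb{R}^n$ defined by
\[ L({\bf f}) := \int_{\mathbb{R}^n \times \mathbb{R}^n} \left\langle {\bf S}_\mu^{-1} {\bf f}, {\bf x}\right\rangle {\bf z} \, d\gamma({\bf x},{\bf z}). \]
Since ${\pi_{x}}_{\#}\gamma = \mu$, the canonical dual reconstruction \eqref{reconstruction2} lets me rewrite the identity as ${\bf f} = \int \left\langle {\bf S}_\mu^{-1} {\bf f}, {\bf x}\right\rangle {\bf x} \, d\gamma({\bf x},{\bf z})$, so that
\[ {\bf f} - L({\bf f}) = \int \left\langle {\bf S}_\mu^{-1}{\bf f}, {\bf x}\right\rangle ({\bf x}-{\bf z}) \, d\gamma({\bf x}, {\bf z}). \]
For the first claim I would bound the inner product by $|\left\langle {\bf S}_\mu^{-1}{\bf f}, {\bf x}\right\rangle| \leq \Vert {\bf S}_\mu^{-1}\Vert_2 \Vert {\bf f}\Vert \Vert {\bf x}\Vert \leq \tfrac{1}{A}\Vert {\bf f}\Vert\Vert {\bf x}\Vert$, obtaining $\Vert {\bf f}-L({\bf f})\Vert \leq \tfrac{\epsilon}{A}\Vert {\bf f}\Vert$. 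For the second claim I would instead use the symmetry of ${\bf S}_\mu^{-1}$ to rewrite $\left\langle {\bf S}_\mu^{-1}{\bf f}, {\bf x}\right\rangle = \left\langle {\bf f}, {\bf S}_\mu^{-1}{\bf x}\right\rangle$ and deduce $\Vert {\bf f}-L({\bf f})\Vert \leq \chi\Vert {\bf f}\Vert$. Under $\epsilon < A$ (resp.\ $\chi < 1$), $L$ is a contractive perturbation of the identity and hence invertible with $\Vert L^{-1}\Vert \leq \tfrac{A}{A-\epsilon}$ (resp.\ $\tfrac{1}{1-\chi}$).

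Writing ${\bf f} = L L^{-1}{\bf f}$ and pairing with ${\bf f}$, the Cauchy--Schwarz inequality in $L^2(\gamma)$ together with the marginal identities yields
\[ \Vert {\bf f}\Vert^4 \leq \int \left\langle {\bf S}_\mu^{-1} L^{-1}{\bf f}, {\bf x}\right\rangle^2 d\mu({\bf x}) \cdot \int \left\langle {\bf f}, {\bf z}\right\rangle^2 d\eta({\bf z}). \]
The upper frame bound of $\mu$ controls the first factor by $B\Vert {\bf S}_\mu^{-1} L^{-1}{\bf f}\Vert^2 \leq \tfrac{B}{A^2}\Vert L^{-1}{\bf f}\Vert^2$, and substituting the bound on $\Vert L^{-1}\Vert$ produces the claimed lower frame bounds $\tfrac{(A-\epsilon)^2}{B}$ and $\tfrac{A^2(1-\chi)^2}{B}$, respectively.

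The main subtlety is in the perturbation step: one must extract $\Vert {\bf f}\Vert$ from $\left\langle {\bf S}_\mu^{-1}{\bf f}, {\bf x}\right\rangle$ \emph{before} integrating against $\Vert {\bf x}-{\bf z}\Vert$, and the two natural Cauchy--Schwarz choices (distributing ${\bf S}_\mu^{-1}$ onto either ${\bf f}$ or ${\bf x}$) are precisely what produce the two coefficients $\epsilon/A$ and $\chi$ and hence the two different hypotheses. Once that bookkeeping is arranged, the argument replays the strategy of Theorem \ref{StabilityPDualFrame} directly, and no gluing lemma is required here because $\gamma$ itself already supplies a joint law on $\mathbb{R}^n \times \mathbb{R}^n$.
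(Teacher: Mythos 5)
Your proposal is correct and follows essentially the same route as the paper's proof: the same operator $L({\bf f})=\int\langle {\bf S}_\mu^{-1}{\bf f},{\bf x}\rangle\,{\bf z}\,d\gamma({\bf x},{\bf z})$, the same two ways of splitting the inner product to get the contraction constants $\epsilon/A$ and $\chi$, and the same Cauchy--Schwarz step in $L^2(\gamma)$ combined with the upper frame bound $B$ to extract the lower bounds $\frac{(A-\epsilon)^2}{B}$ and $\frac{A^2(1-\chi)^2}{B}$. No substantive differences.
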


\begin{proof}
Since $\eta \in \mathcal{P}_2(\mathbb{R}^n)$, $\eta$ is Bessel with bound  $M_2(\eta)$.  Using the reconstruction formula for the canonical dual frame in Equation \eqref{reconstruction2} and $\gamma \in \Gamma(\mu, \eta)$, we have
$$ 
{\bf f} =\int_{\mathbb{R}^n} \langle {\bf S}_\mu^{-1} {\bf f}, {\bf x} \rangle {\bf x}  d\mu({\bf x}) = \int_{\mathbb{R}^n \times \mathbb{R}^n} \langle {\bf S}_\mu^{-1} {\bf f}, {\bf x} \rangle {\bf x} d \gamma({\bf x},{\bf z})  , \ \text{for any} \ {\bf f} \in \mathbb{R}^n.
$$
Now define $L: \mathbb{R}^n \rightarrow \mathbb{R}^n$ by 
     \begin{equation*}
         L({\bf f}) = \int_{\mathbb{R}^n \times \mathbb{R}^n} \langle {\bf S}_\mu^{-1} {\bf f}, {\bf x} \rangle {\bf z}  d \gamma({\bf x},{\bf z}), \ \text{for any} \ {\bf f} \in \mathbb{R}^n.
     \end{equation*}
Therefore, 
\begin{equation*}
    \Vert {\bf f} -L({\bf f}) \Vert 
    = \Big \Vert \int_{\mathbb{R}^n \times \mathbb{R}^n} \langle {\bf S}_\mu^{-1}{\bf f}, {\bf x} \rangle ({\bf x}-{\bf z}) d \gamma({\bf x}, {\bf z}) \Big \Vert 
     \leq \epsilon \ \Vert {\bf S}_\mu^{-1} \Vert_2 \ \Vert  {\bf f} \Vert  \leq \frac{\epsilon}{A}  \Vert  {\bf f} \Vert < \Vert  {\bf f} \Vert,
\end{equation*}
where the second inequality follows from that $\|{\bf S}_\mu^{-1}\|_2 \leq \frac{1}{A}$. 
Thus, $L: \mathbb{R}^n \rightarrow \mathbb{R}^n$ is invertible and $\Vert L^{-1} \Vert \leq \frac{1}{1-\epsilon/ A}$.
Then, for any ${\bf f} \in \mathbb{R}^n$, 
\begin{equation*}
    {\bf f} = LL^{-1}({\bf f}) =  \int_{\mathbb{R}^n \times \mathbb{R}^n} \langle {\bf S}_\mu^{-1}L^{-1} {\bf f}, {\bf x} \rangle {\bf z} d\gamma({\bf x},{\bf z}).
\end{equation*}
Therefore, 
\begin{equation*}
\begin{split}
   \Vert {\bf f} \Vert^4 &=  |\langle {\bf f}, {\bf f} \rangle|^2 = \Big \vert \int_{\mathbb{R}^n \times \mathbb{R}^n} \langle {\bf S}_\mu^{-1}L^{-1} {\bf f}, {\bf x} \rangle \langle  {\bf f}, {\bf z} \rangle d \gamma({\bf x},{\bf z}) \Big \vert^2 \\
    & \leq \int_{\mathbb{R}^n} \left|\langle {\bf S}_\mu^{-1} L^{-1}{\bf f}, {\bf x} \rangle \right|^2 d\mu({\bf x})  \ \int_{\mathbb{R}^n} \left| \langle {\bf f}, {\bf z} \rangle \right|^2 d\eta({\bf z})\\
 &\leq B \Vert  {\bf S}_\mu^{-1} \Vert_2^2  \ \Vert L^{-1} \Vert^2  \ \Vert {\bf f} \Vert^2   \int_{\mathbb{R}^n} \left|\langle {\bf f}, {\bf z} \rangle \right|^2 d\eta({\bf z}) \leq \frac{B\Vert {\bf f} \Vert^2}{A^2 (1-\frac{\epsilon}{A})^2 }  \int_{\mathbb{R}^n} \left| \langle {\bf f}, {\bf z} \rangle \right |^2 d\eta({\bf z}),
\end{split}
\end{equation*}
where the first inequality is due to the Cauchy-Schwarz inequality in $L^2(\gamma)$ and $\gamma \in \Gamma(\mu, \eta)$, and the second inequality follows from that $\mu$ is a probabilistic frame with upper bound $B$. 
Then, for any ${\bf f} \in \mathbb{R}^n$, 
$$ \frac{(A- \epsilon)^2}{B}    \Vert  {\bf f} \Vert^2  \leq   \int_{\mathbb{R}^n} |\langle {\bf f}, {\bf z} \rangle|^2 d\eta({\bf z}) \leq M_2(\eta) \Vert  {\bf f} \Vert^2.$$
Thus, $\eta$ is a probabilistic frame with bounds $\frac{(A- \epsilon)^2}{B}$ and $M_2(\eta)  $.
Furthermore, if 
\begin{equation*}
   \chi := \int_{\mathbb{R}^n \times \mathbb{R}^n}  \Vert {\bf S}_\mu^{-1} {\bf x}\Vert \Vert {\bf x}-{\bf z} \Vert  d \gamma({\bf x},{\bf z})<1,
\end{equation*}
then 
 \begin{equation*}
    \Vert {\bf f} -L({\bf f}) \Vert \leq  \int_{\mathbb{R}^n \times \mathbb{R}^n} |\langle {\bf f}, {\bf S}_\mu^{-1} {\bf x} \rangle| \ \|{\bf x}-{\bf z}\|  d \gamma({\bf x}, {\bf z}) \leq \chi \ \Vert  {\bf f} \Vert < \Vert  {\bf f} \Vert. 
\end{equation*}
Therefore, $L$ is invertible and $\Vert L^{-1} \Vert \leq \frac{1}{1-\chi}$.
Similar arguments show that $\eta$ is a probabilistic frame for $\mathbb{R}^n$ with bounds $\frac{A^2 (1-\chi)^2}{B} \ \text{and} \ M_2(\eta)$.
 \end{proof}

The key step in the above proof is to use the canonical probabilistic dual frame to reconstruct the vector ${\bf f}$. An alternative approach uses the canonical probabilistic Parseval frame, by which we obtain the last proposition in this paper. 
\begin{proposition}\label{ParsevelConstructionPerturbation}
Let $\mu$ be a probabilistic frame for $\mathbb{R}^n$ with bounds $0<A \leq B$ and $\eta \in \mathcal{P}_2(\mathbb{R}^n)$. Suppose there exists $\gamma \in \Gamma(\mu, \eta)$ such that 
\begin{equation*}
   \tau:= \int_{\mathbb{R}^n  \times \mathbb{R}^n} \Vert {\bf x} \Vert \Vert  {\bf S}_\mu^{-1/2} {\bf x}- {\bf z}  \Vert d \gamma({\bf x}, {\bf z}) < \sqrt{A},
\end{equation*}
then $\eta$ is a probabilistic frame for $\mathbb{R}^n$  with bounds $  \frac{(\sqrt{A}- \tau)^2}{B} \ \text{and} \ M_2(\eta)$.
\end{proposition}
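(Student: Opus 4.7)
The plan is to mirror the proof of Proposition \ref{CoulingDualPerturbation} essentially word for word, only swapping the canonical dual reconstruction (Equation \ref{reconstruction2}) for the canonical Parseval reconstruction in the second form of Equation \ref{reconstruction1}. The upper bound $M_2(\eta)$ is immediate because $\eta \in \mathcal{P}_2(\mathbb{R}^n)$ is automatically Bessel, so the real task is to produce the lower bound through an operator that is a small perturbation of the identity.

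Concretely, I would define $L: \mathbb{R}^n \to \mathbb{R}^n$ by
\[
L({\bf f}) := \int_{\mathbb{R}^n \times \mathbb{R}^n} \langle {\bf S}_\mu^{-1/2} {\bf f}, {\bf x} \rangle \, {\bf z} \, d\gamma({\bf x}, {\bf z}).
\]
The key design choice is to place $\mathbf{S}_\mu^{-1/2}$ on ${\bf f}$ (rather than on ${\bf x}$) inside the inner product, so that the resulting perturbation integrand involves $\Vert {\bf x} \Vert$ and matches the definition of $\tau$. Since $\gamma$ has first marginal $\mu$, the second form of Equation \ref{reconstruction1} rewrites as ${\bf f} = \int \langle {\bf S}_\mu^{-1/2} {\bf f}, {\bf x} \rangle {\bf S}_\mu^{-1/2} {\bf x} \, d\gamma({\bf x}, {\bf z})$; subtracting $L({\bf f})$, estimating the integrand by Cauchy--Schwarz, and using $\Vert {\bf S}_\mu^{-1/2} \Vert_2 \leq 1/\sqrt{A}$ yields $\Vert {\bf f} - L({\bf f}) \Vert \leq (\tau/\sqrt{A}) \Vert {\bf f} \Vert$, which is strictly less than $\Vert {\bf f} \Vert$ by the hypothesis $\tau < \sqrt{A}$.

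From here the usual Neumann-series perturbation of identity gives $L$ invertible with $\Vert L^{-1} \Vert \leq \sqrt{A}/(\sqrt{A} - \tau)$. Writing ${\bf f} = L L^{-1} {\bf f}$ as an integral against $\gamma$ and applying Cauchy--Schwarz in $L^2(\gamma)$ produces
\[
\Vert {\bf f} \Vert^4 \leq \left( \int \langle {\bf S}_\mu^{-1/2} L^{-1}{\bf f}, {\bf x} \rangle^2 d\mu({\bf x}) \right) \left( \int \langle {\bf f}, {\bf z} \rangle^2 d\eta({\bf z}) \right).
\]
The upper frame bound $B$ of $\mu$ caps the first factor by $B \Vert {\bf S}_\mu^{-1/2} \Vert_2^2 \Vert L^{-1} \Vert^2 \Vert {\bf f} \Vert^2 \leq B \Vert {\bf f} \Vert^2 / (\sqrt{A} - \tau)^2$, and rearranging delivers the claimed lower bound $(\sqrt{A}-\tau)^2/B$.

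I do not expect any substantive obstacle: every ingredient (Parseval reconstruction, small-perturbation invertibility, Cauchy--Schwarz in $L^2(\gamma)$, the upper frame bound of $\mu$) already appears in Proposition \ref{CoulingDualPerturbation}. The only genuine decision is the placement of $\mathbf{S}_\mu^{-1/2}$ in the definition of $L$; putting it on the other argument of the inner product would force $\Vert {\bf S}_\mu^{-1/2} {\bf x} \Vert$ into the integrand and would not recover a clean bound matching $\tau$.
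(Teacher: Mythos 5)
Your proposal is correct and follows essentially the same route as the paper: the same operator $L({\bf f}) = \int \langle {\bf S}_\mu^{-1/2}{\bf f},{\bf x}\rangle\, {\bf z}\, d\gamma({\bf x},{\bf z})$, the same perturbation estimate $\Vert {\bf f}-L({\bf f})\Vert \le (\tau/\sqrt{A})\Vert{\bf f}\Vert$, and the same Cauchy--Schwarz argument in $L^2(\gamma)$ using the upper frame bound $B$ of $\mu$. No gaps.
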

\begin{proof}
Since $\eta \in \mathcal{P}_2(\mathbb{R}^n)$, $\eta$ is Bessel with bound  $M_2(\eta)$. By the reconstruction formula of the canonical Parseval frame in Equation \eqref{reconstruction1}, for any ${\bf f} \in \mathbb{R}^n$,
\begin{equation*}
   {\bf f}=  \int_{\mathbb{R}^n} \langle {\bf S}_\mu^{-1/2} {\bf f}, {\bf x} \rangle {\bf S}_\mu^{-1/2} {\bf x}  \ d\mu({\bf x}) = \int_{\mathbb{R}^n \times \mathbb{R}^n} \langle {\bf S}_\mu^{-1/2} {\bf f}, {\bf x} \rangle {\bf S}_\mu^{-1/2} {\bf x}  \ d\gamma({\bf x}, {\bf z}).
\end{equation*}
Define $L: \mathbb{R}^n \rightarrow \mathbb{R}^n$ by 
$$
L({\bf f}) = \int_{\mathbb{R}^n \times \mathbb{R}^n} \langle {\bf S}_\mu^{-1/2}{\bf f}, {\bf x} \rangle {\bf z} d \gamma({\bf x},{\bf z}), \ \text{for any} \ {\bf f} \in \mathbb{R}^n.
$$
Therefore, $L$ is linear and 
\begin{equation*}
    \Vert {\bf f} -L({\bf f}) \Vert \leq \Vert {\bf S}_\mu^{-1/2} {\bf f}  \Vert \int_{\mathbb{R}^n  \times \mathbb{R}^n} \Vert {\bf x} \Vert \Vert  {\bf S}_\mu^{-1/2} {\bf x}- {\bf z}  \Vert d \gamma({\bf x}, {\bf z})  \leq \frac{\tau}{\sqrt{A}}  \Vert  {\bf f} \Vert <  \Vert  {\bf f} \Vert.
\end{equation*}
Thus, $L: \mathbb{R}^n \rightarrow \mathbb{R}^n$ is invertible and $\Vert L^{-1} \Vert \leq \frac{1}{1-\tau / \sqrt{A}}$.
Then for any ${\bf f} \in \mathbb{R}^n$, 
\begin{equation*}
    {\bf f} = LL^{-1}({\bf f}) =  \int_{\mathbb{R}^n \times \mathbb{R}^n} \langle {\bf S}_\mu^{-1/2}L^{-1} {\bf f}, {\bf x} \rangle {\bf z} d \gamma({\bf x},{\bf z}).
\end{equation*}
By the Cauchy-Schwarz inequality and the frame property of $\mu$, we have
\begin{equation*}
\begin{split}
   \Vert {\bf f} \Vert^4 &=  |\langle {\bf f}, {\bf f} \rangle|^2 = \Big \vert \int_{\mathbb{R}^n \times \mathbb{R}^n} \langle {\bf S}_\mu^{-1/2}L^{-1} {\bf f}, {\bf x} \rangle \langle  {\bf f}, {\bf z} \rangle d \gamma({\bf x},{\bf z})  \Big  \vert^2 \\
    & \leq \int_{\mathbb{R}^n} |\langle {\bf S}_\mu^{-1/2} L^{-1}{\bf f}, {\bf x} \rangle|^2 d\mu({\bf x})  \ \int_{\mathbb{R}^n} |\langle {\bf f}, {\bf z} \rangle|^2 d\eta({\bf z}) \leq \frac{B \Vert {\bf f} \Vert^2}{A (1-\frac{\tau}{\sqrt{A}})^2 }   \int_{\mathbb{R}^n} |\langle {\bf f}, {\bf z} \rangle|^2 d\eta({\bf z}).
\end{split}
\end{equation*}
Then for ${\bf f} \in \mathbb{R}^n$, 
 $$ 
 \frac{(\sqrt{A}-\tau)^2}{B}    \Vert  {\bf f} \Vert^2  \leq  \ \int_{\mathbb{R}^n} |\langle {\bf f}, {\bf z} \rangle|^2 d\eta({\bf z}) \leq M_2(\eta) \ \Vert  {\bf f} \Vert^2.
 $$ 
 Therefore, $\eta$ is a probabilistic frame for $\mathbb{R}^n$ with bounds $\frac{(\sqrt{A}- \tau)^2}{B}  \ \text{and} \ M_2(\eta)$.
\end{proof}

\section{Proof of Proposition \ref{QuadClose} }\label{sec:proof}
\begin{proof}[Proof of Proposition \ref{QuadClose}]
Since  $\nu \in \mathcal{P}_2(\mathbb{R}^n)$, $\nu$ is Bessel with bound  $M_2(\nu)$. Next, let us show the lower frame bound. 
Using the reconstruction formula of the canonical dual frame in Equation \eqref{reconstruction2} and  the fact that $\gamma \in \Gamma(\mu, \nu)$, we have
\begin{equation*}
   {\bf f}=  \int_{\mathbb{R}^n} \langle {\bf f},  {\bf S}_\mu^{-1} {\bf x} \rangle {\bf x}   d\mu({\bf x}) = \int_{\mathbb{R}^n \times \mathbb{R}^n} \langle {\bf f},  {\bf S}_\mu^{-1} {\bf x} \rangle {\bf x}  d\gamma({\bf x},{\bf y}), \ \text{for any} \ {\bf f} \in \mathbb{R}^n.
\end{equation*}
Now define a linear map $L: \mathbb{R}^n \rightarrow \mathbb{R}^n$ by 
$$ L({\bf f}) = \int_{\mathbb{R}^n \times \mathbb{R}^n} \langle {\bf f},  {\bf S}_\mu^{-1} {\bf x} \rangle  {\bf y}  d\gamma({\bf x},{\bf y}), \ \text{for any ${\bf f} \in \mathbb{R}^n$}.
$$
By the Cauchy-Schwarz inequality and the fact that $ {{\bf S}_\mu^{-1}}_{\#} \mu$ is a probabilistic frame with upper bound $\frac{1}{A}$, we have
\begin{equation*}
\begin{split}
    \Vert {\bf f} -L({\bf f}) \Vert^2 
    &\leq \Big | \int_{\mathbb{R}^n \times \mathbb{R}^n} \vert \langle {\bf f}, {\bf S}_\mu^{-1} {\bf x} \rangle \vert  \ \Vert {\bf x}-{\bf y} \Vert d \gamma({\bf x}, {\bf y})  \Big |^2 \\
    &\leq \int_{\mathbb{R}^n} \vert \langle {\bf f}, {\bf x} \rangle \vert^2  d({{\bf S}_\mu^{-1}}_{\#}\mu) ({\bf x})  \int_{\mathbb{R}^n  \times \mathbb{R}^n} \Vert {\bf x} -{\bf y}\Vert^2 d \gamma({\bf x}, {\bf y})   \leq  \frac{\lambda}{A}  \Vert  {\bf f} \Vert ^2 < \Vert  {\bf f} \Vert ^2,  
\end{split}
\end{equation*}
where we use the pushforward (change-of-variables) identity for $ {{\bf S}_\mu^{-1}}_{\#} \mu$ in the second inequality.  Thus, $L: \mathbb{R}^n \rightarrow \mathbb{R}^n$ is invertible and $\Vert L^{-1} \Vert \leq \frac{1}{1-\sqrt{\frac{\lambda}{A}}}$.
Then, for any ${\bf f} \in \mathbb{R}^n$, 
\begin{equation*}
    {\bf f} = LL^{-1}({\bf f}) =  \int_{\mathbb{R}^n \times \mathbb{R}^n} \langle L^{-1}{\bf f},  {\bf S}_\mu^{-1} {\bf x} \rangle  {\bf y}  d\gamma({\bf x},{\bf y}).
\end{equation*}
By the Cauchy-Schwarz inequality and the frame property of $ {{\bf S}_\mu^{-1}}_{\#} \mu$ again, we have
\begin{equation*}
\begin{split}
   \Vert {\bf f} \Vert^4 &=  |\langle {\bf f}, {\bf f}\rangle|^2 = \Big \vert  \int_{\mathbb{R}^n \times \mathbb{R}^n} \langle L^{-1}{\bf f},  {\bf S}_\mu^{-1} {\bf x} \rangle   \langle  {\bf f}, {\bf y} \rangle   d\gamma({\bf x},{\bf y}) \Big \vert^2\\
   & \leq \int_{\mathbb{R}^n} |\langle L^{-1}{\bf f},  {\bf S}_\mu^{-1} {\bf x} \rangle|^2 d\mu({\bf x})  \ \int_{\mathbb{R}^n} |\langle {\bf f}, {\bf y} \rangle|^2 d\nu({\bf y})\\
 &\leq \frac{ \Vert L^{-1} {\bf f} \Vert^2 }{A}    \ \int_{\mathbb{R}^n} |\langle {\bf f}, {\bf y} \rangle|^2 d\nu({\bf y})
   \leq \frac{\Vert {\bf f} \Vert^2 }{A (1-\sqrt{\frac{\lambda}{A}})^2}   \int_{\mathbb{R}^n} |\langle {\bf f}, {\bf y} \rangle|^2 d\nu({\bf y}).
\end{split}
\end{equation*}
Then for any ${\bf f} \in \mathbb{R}^n$, 
 $$ 
 (\sqrt{A}-\sqrt{\lambda})^2    \Vert  {\bf f} \Vert^2  \leq  \ \int_{\mathbb{R}^n} |\langle {\bf f}, {\bf y} \rangle|^2 d\nu({\bf y}) \leq M_2(\nu) \Vert {\bf f}  \Vert^2. 
 $$  
 Therefore, $\nu$ is a probabilistic frame for $\mathbb{R}^n$ with bounds $(\sqrt{A}-\sqrt{\lambda})^2 \ \text{and} \ M_2(\nu)$.
\end{proof}

\section*{Acknowledgements}
This paper is dedicated to my family and friends.
The author thanks the reviewer for their constructive comments, which have greatly improved this manuscript.
\bibliographystyle{unsrt} 
\bibliography{refs_jmaa}
\end{document}